\definecolor{dark-blue}{rgb}{0,0,0.6}
\definecolor{Purple}{rgb}{0.2,0,0.25}
\theoremstyle{plain}
\newtheorem{thm}{\protect\theoremname}
\theoremstyle{definition}
\newtheorem{defn}[thm]{\protect\definitionname}
\theoremstyle{plain}
\newtheorem{lem}[thm]{\protect\lemmaname}
\theoremstyle{plain}
\newtheorem{prop}[thm]{\protect\propositionname}
\theoremstyle{remark}
\newtheorem{rem}[thm]{\protect\remarkname}
\theoremstyle{definition}
\newtheorem{example}[thm]{\protect\examplename}
\providecommand{\definitionname}{Definition}
\providecommand{\examplename}{Example}
\providecommand{\lemmaname}{Lemma}
\providecommand{\propositionname}{Proposition}
\providecommand{\remarkname}{Remark}
\providecommand{\theoremname}{Theorem}
\numberwithin{equation}{section}
\newcommand{\bref}[1]{\textbf{\ref{#1}}} 
\newcommand{\beqref}[1]{\textbf{(\ref{#1})}}
\date{Date: June 5, 2024.}
\keywords{Alternating algorithm, best approximation pair,
Dini's Theorem, disjoint intersections, projection methods, Simultaneous
Halpern--Lions--Wittmann--Bauschke (S-HLWB) algorithm.}
\subjclass[2020]{41A50; 41A65; 90C25; 46N10; 47N10; 41A52; 47H09; 47H10; 46C05}
\begin{document}
\title[The alternating HLWB algorithm for finding the BAP]{The alternating simultaneous Halpern--Lions--Wittmann--Bauschke
algorithm for finding the best approximation pair for two disjoint
intersections of convex sets}

\author{Yair Censor}

\address{Department of Mathematics, University of Haifa, Mt. Carmel, 3498838 Haifa,  Israel.}
\email{(Yair Censor)  yair@math.haifa.ac.il}

\author{Rafiq Mansour}

\address{Department of Mathematics, University of Haifa, Mt. Carmel, 3498838 Haifa,  Israel. Current address: Mawares E-Thahab 2, Tira, P. O. B. 562, zip code: 44915, Israel.}
\email{(Rafiq Mansour) intogral@gmail.com} 

\author{Daniel Reem}

\address{The Center for Mathematics and Scientific Computation (CMSC), University of Haifa, Mt. Carmel, 3498838 Haifa, Israel.} 
\email { (Daniel Reem) dream@math.haifa.ac.il} 

\maketitle

\begin{abstract} 
Given two nonempty and disjoint intersections of closed and convex
subsets, we look for a best approximation pair relative to them, i.e.,
a pair of points, one in each intersection, attaining the minimum
distance between the disjoint intersections. We propose an iterative
process based on projections onto the subsets which generate the intersections. The process is inspired by the Halpern-Lions-Wittmann-Bauschke algorithm and the classical alternating process of Cheney and Goldstein, and its advantage is that there is no need to project onto the intersections themselves, a task which can be rather demanding. We prove that under certain conditions the two interlaced subsequences converge to a best approximation pair. These conditions hold, in particular, when the
space is Euclidean and the subsets which generate the intersections
are compact and strictly convex. Our result extends the one of  Aharoni,
Censor and Jiang {[}``Finding a best approximation pair of points
for two polyhedra'', \textit{Computational Optimization and Applications}
71 (2018), 509--523{]} who considered the case of finite-dimensional
polyhedra. 
\end{abstract}

\noindent \tableofcontents{}

\section{Introduction}

\subsection{Background:}

We consider the problem of finding a best approximation pair relative
to two closed and convex disjoint sets $A$ and $B$, namely a pair
$\left(a,b\right)\in A\times B$ satisfying $\left\Vert a-b\right\Vert =\inf\left\{ \left\Vert x-y\right\Vert \mid x\in A,\:y\in B\right\} $.
This problem goes back to the classical 1959 work of Cheney and Goldstein
\cite{cheney1959proximity} which employs proximity maps (i.e., metric
projections, Euclidean nearest point projections) and is at the heart
of what is nowadays called ``finding best approximation pairs relative
to two sets''.

We consider the important situation where each of the two sets is
a nonempty intersection of a finite family of compact and convex sets
and the space is Euclidean (i.e., a finite-dimensional real Hilbert space).
The practical importance of this situation stems from its relevance
to real-world situations wherein the feasibility-seeking modelling
is used and there are two disjoint constraints sets. One set represents
``hard'' constraints, i.e., constraints that must be met, while
the other set represents ``soft'' constraints which should be observed
as much as possible. Under such circumstances, the desire to find
a point in the hard constraints intersection set that will be closest
to the intersection set of soft constraints leads to the problem of
finding a best approximation pair relative to the two sets which are intersections
of constraints sets: see, e.g., \cite{GoldburgMarks1985jour} and
\cite{YoulaVelasco1986jour}, and also the more general approach
\cite{hard-soft-1999}, for applications in signal processing.

One way to approach the best approximation pair problem is to project
alternatingly onto each of the two intersections and then to take
the limit, as done, for instance, in \cite[Section 3]{bauschke1993convergence},
\cite[Theorem 4.8]{bauschke1994dykstra}, \cite[Theorem 4]{cheney1959proximity},
\cite[Theorem 4.1]{KopeckaReich2004jour}, \cite[Theorem 1.4]{kopecka2012note}
and \cite[Theorem 1]{YoulaVelasco1986jour}, or to apply iteratively
other kinds of operators (related to orthogonal projections), as done
in \cite[Theorem 3.13]{bauschke2004finding}, \cite[p. 656]{GoldburgMarks1985jour}
and \cite[Corollary 2.8]{luke2008finding}; see also the review \cite{censor2018algorithms}.
While this approach is satisfying from the mathematical point of view,
it can be rather problematic from the computational point of view
because applying a metric projection operator to the intersection
of finitely-many sets is commonly a non-trivial problem on its own
which might be solved only approximately. (We note, parenthetically,
that there exist, of course, iterative methods which can find asymptotically,
and sometimes exactly, a projection of a point onto the intersection
of a finite family of closed and convex subsets: see, for instance,
\cite{BauschkeBorwein1996jour,BC17,Ceg-book,CensorCegielski2015jour,CensorZenios1997book}
and some of the references therein.)

We approach the best approximation pair problem from a constructive
algorithmic point of view. A major advantage of our approach is that
it eliminates the need to project onto  each of the two intersection
sets, and replaces these computationally demanding projections by
a certain weighted sum of projections onto each of the members which
induce  these intersection sets. The method that we employ is inspired
by  the HLWB algorithm of \cite[Corollary 30.2]{BC17}, which we
apply alternatingly to the two intersections (the letters HLWB are
acronym of the names of the authors of the corresponding papers: Halpern
in \cite{halpern1967fixed}, Lions in \cite{lions1977approximation},
Wittmann in \cite{wittmann1992approximation}, and Bauschke in \cite{bauschke1996approximation};
the acronym HLWB was dubbed in \cite{censor2006computational}).

More precisely, the iterative process that we consider is divided
into sweeps, where in the odd numbered sweeps we project successively
onto a collection of compact and convex subsets $A_{i}$ defining
$A:=\bigcap_{i=1}^{I}A_{i}\neq\emptyset,\,I\in\mathbb{N}$, and construct
from all of these projections certain weighted sums, and in even numbered
sweeps we act similarly but with the collection of compact and convex
subsets $B_{j}$ defining $B:=\bigcap_{j=1}^{J}B_{j}\neq\emptyset,\,J\in\mathbb{N}$,
where $A\bigcap B=\emptyset$. An important component in the method
is that the number of successive weighted sums increases from sweep
to sweep. Under the assumption that there exists a unique best approximation
pair, we are able to show that our algorithmic scheme converges to
this pair. This assumption, of uniqueness of the best approximation
pair, is satisfied, in particular, when all the sets $A_{i},\,B_{j},\,i\in\left\{ 1,2,\ldots,I\right\} $
and $j\in\left\{ 1,2,\ldots,J\right\} $ are strictly convex, as we
show in Proposition \bref{prop:Proposition 6} below.

Our work is motivated by the work of Aharoni et al. \cite{aharoni2018finding},
in which the authors present an algorithm which looks for a best approximation
pair relative to given two disjoint convex polyhedra. In that work
they propose a process based on projections onto the half-spaces defining
the two polyhedra, and this process is essentially a sequential (that
is, non-simultaneous) alternating version of the HLWB algorithm; in
contrast, the method that we employ is a simultaneous version of the
HLWB algorithm. While the given subsets in their case are essentially
linear (affine), in our case they are essentially nonlinear (non-affine);
on the other hand, the subsets in \cite{aharoni2018finding} are
not necessarily bounded as in our case, and no uniqueness assumption
on the set of best approximation pairs is imposed in \cite{aharoni2018finding}
like is done here; hence our work extends, but does not generalize,
their work.

The proof of our main convergence result (Theorem \bref{thm:Main}
below) is partly inspired by the proof of convergence of the main
result in \cite{aharoni2018finding}, namely \cite[Theorem 1]{aharoni2018finding},
but important differences exist because the settings are different.
In particular, along the way we present and use a simple but useful
generalization of the celebrated Dini's theorem for uniform convergence
(Proposition \bref{prop:generalized-Dini's-Theorem} below).

We note that also \cite{bauschke2022finding} is motivated by \cite{aharoni2018finding}
and extends it to closed and convex subsets which are not just polyhedra.
The authors of \cite{bauschke2022finding} study the Douglas--Rachford
algorithm, a dual-based proximal method, a proximal distance algorithm,
and a stochastic subgradient descent approach. The presentation in
\cite{bauschke2022finding} is based on reformulating the problem
into a minimization problem and applying various approaches to it,
theoretically and experimentally, without convergence results (see
also \cite{Dax2006jour} for a dual reformulation of the best approximation
problem as a maximization problem, with some explicit examples), whereas
our present work maintains the original problem formulation and employs
tools from fixed point theory. In connection with \cite{bauschke2022finding}
it is also worth mentioning \cite[Remark 3.1]{bauschke2022finding}
which says that it is ``an interesting topic for further
research'' to extend the algorithm presented in \cite{aharoni2018finding}
``to the case when the sets underlying the intersections are not halfspaces''.
This is exactly what we do in this work, at least under the assumption
that the best approximation pair is unique. 

\subsection{Paper layout:}

The paper is organized as follows. In Section \bref{sec:Preliminaries}
we present various definitions and other relevant preliminaries. In Section \bref{sec:Sufficient condition}
we present a sufficient condition which ensures that there is a unique
best approximation pair. In Section \bref{sec:Features} we define
the Simultaneous-HLWB (S-HLWB) operator and introduce some of its
features. In Section \bref{sec:The method} we present our new method
for finding a best approximation pair in a finite-dimensional Hilbert
space. In Section \bref{sec:Convergence} we prove our main result,
namely the convergence of the alternating S-HLWB algorithmic sequence.
We conclude in Section \bref{sec:Concluding-remarks} with a few remarks
and lines for further investigation. 

\global\long\def\labelenumi{\roman{enumi}.}%

\section{Preliminaries\label{sec:Preliminaries}}

For the reader's convenience we include in this section some properties
of operators in Hilbert space that will be used in the sequel. Although
our setting is a Euclidean space, many of the definitions and results
mentioned in this section can be generalized to infinite-dimensional
spaces. We use the excellent books of Bauschke and Combettes \cite{BC17}
and of Cegielski \cite{Ceg-book} as our desk-copy in which all the
results of this section can be found. Let $\mathcal{H}$ be a real
Hilbert space with inner product $\left\langle \cdot,\cdot\right\rangle $
and induced norm $\parallel\cdot\parallel$, and let $X\subseteq\mathcal{H}$
be a nonempty, closed and convex subset. Let $C$ be a nonempty closed
and convex subset of $\mathcal{H}$, let $x\in\mathcal{H}$, and let
$c\in C$. Denote the distance from $x$ to $C$ by $d\left(x,C\right):=\textup{inf}_{c\in C}\Vert x-c\Vert$. It is well-known that the infimum is attained at a unique point called
the \textbf{projection (or the orthogonal projection) of $x$ onto
$C$,} and it is denoted by $P_{C}(x)$. Id stands below for the identity
operator and  $ri\,C$ stands for the relative interior of a set $C$.
We recall that $dist\left(A,B\right):=\inf\left\{ \left\Vert u-v\right\Vert \mid u\in A,v\in B\right\} $
whenever $A$ and $B$ are nonempty subsets of $X$, and we say that
$dist\left(A,B\right)$ is attained whenever there are $a\in A$ and
$b\in B$ such that $\left\Vert a-b\right\Vert =dist\left(A,B\right)$;
in this case we call $\left(a,b\right)$ a \textbf{best approximation
pair relative to $A$ and $B$}. It is straightforward to check that
in this case, if in addition $A$ and $B$ are closed and convex,
then $a=P_{A}b$ and $b=P_{B}a$.

Let $r$ be a nonengative number. The closed ball $B\left[x,r\right]$
is defined by 
\begin{equation}
B\left[x,r\right]:=\left\{ y\in\mathcal{H}\mid\left\Vert x-y\right\Vert \leq r\right\} ,
\end{equation}
for each $x\in\mathcal{H}$.

It is well-known that closed balls in a Euclidean space are compact,
see, e.g., \cite[Fact 2.33]{BC17}. For each $m\in\mathbb{N}$ we
denote by $\Delta_{m}$ the $(m-1)$-dimensional unit simplex, that
is $\vartriangle_{m}:=\{u=(u_i)_{i=1}^m\in\mathbb{R}^{m}\mid u_i\geq 0\,\,\textnormal{for all}\,\,i\,\, \textnormal{and} \sum_{i=1}^{m}u_{i}=1\}$. 
A vector $u\in\Delta_{m}$ is called a weight vector,
and a vector $u=(u)_{i=1}^{m}\in ri\,\Delta_{m}$ is called a positive
weight vector (that is, $\sum_{i=1}^{m}u_{i}=1$ and $u_{i}>0$ for
all $i\in\{1,\ldots,m\}$).
\begin{defn}
\label{def:NE+contraction}Let  $T:X\rightarrow\mathcal{H}$. Then
\end{defn}

\begin{enumerate}
\item $T$ is called \emph{Nonexpansive }(NE) if $\Vert T(x)-T(y)\Vert\leq\Vert x-y\Vert$
for all $x,y\in X$. 
\item $T$ is \textit{\emph{called}}\emph{ averaged}\textit{\emph{ if there
are $\lambda\in(0,1)$ and an NE operator }}$S:X\rightarrow\mathcal{H}$
\textit{\emph{such that $T=\lambda I+(1-\lambda)S$}}. 
\end{enumerate}
We have the following. 
\begin{lem}
\label{lem:comp of contra is a contra}\cite[Lemma 2.1.12]{Ceg-book}
Given $m\in\mathbb{N},$ let $S_{i}:X\rightarrow X$, be NE  for all
$i\in\left\{ 1,2,\ldots,m\right\} $ and let $w\in\Delta_{m}$. Then: 
\begin{enumerate}
\item The convex combination $S:=\sum_{i=1}^{m}w_{i}S_{i}$ is NE. 
\item The composition $S:=S_{m}S_{m-1}\cdots S_{1}$ is NE. 
\end{enumerate}
\end{lem}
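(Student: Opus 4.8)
The plan is to treat the two assertions separately, since each reduces to a short application of the defining inequality for nonexpansive operators, combined with either the triangle inequality (for the convex combination) or an induction (for the composition).

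For the first item I would fix $x,y \in X$ and expand the difference as
\[
S(x)-S(y) = \sum_{i=1}^{m} w_{i} S_{i}(x) - \sum_{i=1}^{m} w_{i} S_{i}(y) = \sum_{i=1}^{m} w_{i}\bigl(S_{i}(x)-S_{i}(y)\bigr).
\]
Applying the triangle inequality for the norm and then the nonexpansiveness of each $S_{i}$ (Definition \ref{def:NE+contraction}) yields
\[
\left\Vert S(x)-S(y)\right\Vert \le \sum_{i=1}^{m} w_{i}\left\Vert S_{i}(x)-S_{i}(y)\right\Vert \le \sum_{i=1}^{m} w_{i}\left\Vert x-y\right\Vert = \left\Vert x-y\right\Vert ,
\]
where the final equality uses the fact that the weights form a convex combination, namely $\sum_{i=1}^{m} w_{i}=1$. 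I would also note in passing that $S$ maps $X$ into $X$: each $S_{i}(x)\in X$, and a convex combination of points of $X$ remains in $X$ by convexity, so the operator is well defined.

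For the second item I would argue by induction on $m$. The base case $m=1$ is precisely the hypothesis. For the inductive step, set $T := S_{m-1}S_{m-2}\cdots S_{1}$, which is nonexpansive by the induction hypothesis and maps $X$ into $X$. Then for any $x,y\in X$,
\[
\left\Vert S_{m}(T(x))-S_{m}(T(y))\right\Vert \le \left\Vert T(x)-T(y)\right\Vert \le \left\Vert x-y\right\Vert ,
\]
the first inequality being the nonexpansiveness of $S_{m}$ applied at the points $T(x),T(y)\in X$, and the second being the induction hypothesis. Since $S = S_{m}T$, this is exactly the claim.

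I do not expect a genuine obstacle here. The only points deserving (minimal) care are making sure that all intermediate iterates stay in the common domain and codomain $X$, so that the compositions and convex combinations are well defined, and remembering to invoke $\sum_{i=1}^{m} w_{i}=1$ at the last step of the first part. Both facts follow at once from the convexity of $X$ and the standing assumption $S_{i}\colon X\to X$.
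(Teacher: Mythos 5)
Your proof is correct; the paper itself gives no proof of this lemma, merely citing \cite[Lemma 2.1.12]{Ceg-book}, and your argument (triangle inequality with $\sum_{i=1}^{m}w_{i}=1$ for the convex combination, and chaining the nonexpansiveness inequalities through the composition) is exactly the standard one found in that reference. No gaps.
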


\begin{prop}
\label{prop:P_C=00003D00003DNE} 
\begin{enumerate}
\item The orthogonal projection $P_{C}:\mathcal{H}\to\mathcal{H}$ onto
a nonempty, closed and convex subset $C$ of the space is NE, and
its fixed point set is $C$ itself. 
\item If for some pair $(x,y)\in\mathcal{H}^{2}$ equality holds in the
NE inequality related to $P_{C}$, that is, if $\|P_{C}x-P_{C}y\|=\|x-y\|$,
then $\|x-P_{C}x\|=\|y-P_{C}y\|$. 
\end{enumerate}
\end{prop}

\begin{proof}
For the proof of the first part see \cite[Theorem 2.2.21 (i) and (iv)]{Ceg-book}.
The proof of the second part (and, actually, of the first part as
well), can be found in \cite[Theorem 3]{cheney1959proximity}.
\end{proof}
\begin{defn}
\label{def:steering parameter seq.} A \textbf{steering parameters
sequence} $\left\{ \tau_{k}\right\} _{k=0}^{\infty}$ is a real sequence
in $\left(0,1\right)$ such that $\tau_{k}\rightarrow0$, $\sum_{k=0}^{\infty}\tau_{k}=+\infty$,
and $\sum_{k=0}^{\infty}\left|\tau_{k+1}-\tau_{k}\right|<+\infty$. 
\end{defn}

\begin{rem}
\label{rem:An-example-of-Steering-Parameter-Seq} An example of a
steering parameter sequence $\left\{ \tau_{k}\right\} _{k=1}^{\infty}$
is $\tau_{k}:=\dfrac{1}{k}$ for all $k\in\mathbb{N}$. 
\end{rem}

The following theorem is the corollary \cite[Corollary 30.2]{BC17}
with only some symbols changed to make it agree with the notations
that we use. The iterative process of \beqref{eq:simult-HLWB} represents
the \textbf{simultaneous HLWB (S-HLWB) algorithm}. 
\begin{thm}
\label{cor:simultaneous HLWB}Let $X$ be a nonempty closed convex
subset of $\mathcal{H}$, let $\left\{ T_{\ell}\right\} _{\ell=1}^{L},\,L\in\mathbb{N}$
be a finite family of  NE operators from $X$ to $X$ such that  $C:=\bigcap_{\ell=1}^{L}\textnormal{Fix}T_{\ell}\neq\emptyset$,
let $w=(w_{\ell})_{\ell=1}^{L}$ be a positive weight vector and let
$x\in X$ (called an anchor point). Let $\left\{ \tau_{k}\right\} _{k=0}^{\infty}$
be a sequence of steering parameters, let $x^{0}\in X$, and set 
\begin{equation}
\begin{split}\left(\forall k\geq0\right)\; & x^{k+1}:=\tau_{k}x+\left(1-\tau_{k}\right)\end{split}
\sum_{\ell=1}^{L}w_{\ell}T_{\ell}x^{k}.\label{eq:simult-HLWB}
\end{equation}
Then $\underset{k\rightarrow\infty}{\textnormal{lim}}x^{k}=P_{C}x$. 
\end{thm}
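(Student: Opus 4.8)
The plan is to collapse the finite family into a single operator and then run the classical Halpern--Lions--Wittmann--Bauschke analysis. Set $T:=\sum_{\ell=1}^{L}w_{\ell}T_{\ell}$. By Lemma~\ref{lem:comp of contra is a contra}(1) the operator $T$ is nonexpansive from $X$ to $X$, and the recursion \eqref{eq:simult-HLWB} becomes the single-operator Halpern iteration $x^{k+1}=\tau_{k}x+\left(1-\tau_{k}\right)Tx^{k}$. Hence it suffices to prove that $\textnormal{Fix}\,T=C$ and that $x^{k}\to P_{\textnormal{Fix}\,T}\,x$.

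The first genuinely nontrivial step is the identification $\textnormal{Fix}\,T=C$. The inclusion $C\subseteq\textnormal{Fix}\,T$ is immediate, since $T_{\ell}q=q$ for every $\ell$ forces $Tq=\sum_{\ell}w_{\ell}q=q$. For the reverse inclusion I would take $p\in\textnormal{Fix}\,T$ and any $q\in C$ and chase the chain
\[
\|p-q\|=\|Tp-q\|=\Bigl\|\sum_{\ell=1}^{L}w_{\ell}\bigl(T_{\ell}p-q\bigr)\Bigr\|\le\sum_{\ell=1}^{L}w_{\ell}\|T_{\ell}p-q\|\le\sum_{\ell=1}^{L}w_{\ell}\|p-q\|=\|p-q\|,
\]
so equality holds throughout. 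Since each $w_{\ell}>0$, the vectors $T_{\ell}p-q$ all have common norm $\|p-q\|$, while their convex combination $\sum_{\ell}w_{\ell}(T_{\ell}p-q)$ equals $p-q$, again of norm $\|p-q\|$. Strict convexity of the Hilbert norm then forces $T_{\ell}p-q=p-q$, i.e., $T_{\ell}p=p$ for every $\ell$, whence $p\in C$.

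It remains to prove $x^{k}\to P_{C}x$ for the single nonexpansive $T$, for which I would follow the standard route. (i) \emph{Boundedness}: fixing $q\in C=\textnormal{Fix}\,T$, an induction using nonexpansiveness of $T$ yields $\|x^{k}-q\|\le\max\{\|x^{0}-q\|,\|x-q\|\}$, so $\{x^{k}\}$ and $\{Tx^{k}\}$ are bounded. (ii) \emph{Asymptotic regularity}: from the identity $x^{k+1}-x^{k}=(1-\tau_{k})(Tx^{k}-Tx^{k-1})+(\tau_{k}-\tau_{k-1})(x-Tx^{k-1})$ one gets $\|x^{k+1}-x^{k}\|\le(1-\tau_{k})\|x^{k}-x^{k-1}\|+M\,|\tau_{k}-\tau_{k-1}|$ with $M:=\sup_{k}\|x-Tx^{k}\|<\infty$; together with $\tau_{k}\to0$, $\sum_{k}\tau_{k}=\infty$ and $\sum_{k}|\tau_{k+1}-\tau_{k}|<\infty$ from Definition~\ref{def:steering parameter seq.}, a discrete (Xu-type) recursion lemma gives $\|x^{k+1}-x^{k}\|\to0$, and then $\|x^{k}-Tx^{k}\|\to0$ because $x^{k+1}-Tx^{k}=\tau_{k}(x-Tx^{k})\to0$. (iii) Since $X\subseteq\mathcal{H}$ is finite-dimensional, $\{x^{k}\}$ has convergent subsequences, and continuity of $\textnormal{Id}-T$ together with $\|x^{k}-Tx^{k}\|\to0$ shows every subsequential limit lies in $\textnormal{Fix}\,T=C$.

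Finally I would close with the variational inequality $\langle x-P_{C}x,\,c-P_{C}x\rangle\le0$ characterizing $P_{C}x$, which via (iii) gives $\limsup_{k}\langle x-P_{C}x,\,x^{k}-P_{C}x\rangle\le0$, and feed this into a terminal estimate of the form
\[
\|x^{k+1}-P_{C}x\|^{2}\le(1-\tau_{k})\|x^{k}-P_{C}x\|^{2}+2\tau_{k}\,\langle x-P_{C}x,\,x^{k+1}-P_{C}x\rangle,
\]
to which the Xu recursion lemma (using $\tau_{k}\to0$ and $\sum_{k}\tau_{k}=\infty$) applies to conclude $\|x^{k}-P_{C}x\|\to0$. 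I expect the main obstacle to be the quantitative bookkeeping in step (ii) and in this terminal recursion: it is precisely there that all three steering conditions are exercised, and massaging the estimates into the exact form demanded by the summability/non-summability hypotheses is the delicate part. (Since the statement is quoted essentially verbatim from \cite[Corollary~30.2]{BC17}, one may alternatively simply invoke that reference, the above being the proof it encapsulates.)
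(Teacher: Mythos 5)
The paper offers no proof of this theorem at all: it is imported verbatim (up to notation) from \cite[Corollary 30.2]{BC17}, so there is no internal argument to set yours against. Your reconstruction is essentially the proof that the cited corollary encapsulates: collapse the family into the single nonexpansive operator $T:=\sum_{\ell=1}^{L}w_{\ell}T_{\ell}$, identify $\mathrm{Fix}\,T=\bigcap_{\ell=1}^{L}\mathrm{Fix}\,T_{\ell}$, and run the Halpern--Wittmann analysis (boundedness, asymptotic regularity from the three steering conditions, subsequential limits in $\mathrm{Fix}\,T$, the variational characterization of $P_{C}x$, and Xu's recursion lemma), and all the individual estimates you write down check out. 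Two remarks. First, the identification $\mathrm{Fix}\,T=C$ need not be re-derived from strict convexity of the norm: it is exactly Proposition \ref{prop:Fix(combi)=00003Dinter(Fix)} of the paper (citing \cite[Theorem 2.1.14]{Ceg-book}), so you could simply invoke it; your norm-chase is nevertheless correct. Second, your step (iii) silently assumes that $\mathcal{H}$ is finite-dimensional in order to extract norm-convergent subsequences; the theorem as stated (and as proved in \cite{BC17}) lives in an arbitrary Hilbert space, where one must instead pass to weakly convergent subsequences and use demiclosedness of $\mathrm{Id}-T$ to place the weak limits in $\mathrm{Fix}\,T$, after which the $\limsup$ step and the terminal recursion go through unchanged. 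Since the paper only ever applies the theorem in Euclidean space this restriction is harmless for its purposes, but it should be flagged if your proof is meant to cover the statement as written. Modulo that, and the standard Xu lemma which you name but do not prove, the argument is sound and is the natural self-contained substitute for the citation.
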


The theorem below was proved by Cheney and Goldstein in \cite[Theorem 2]{cheney1959proximity}.
See also \cite[Theorem 3]{GoldburgMarks1985jour} and \cite[Theorem 3]{YoulaVelasco1986jour}
for different proofs in the case where one of the sets is bounded
(\cite[Theorem 3]{GoldburgMarks1985jour} proves one direction, while
\cite[Theorem 3]{YoulaVelasco1986jour} proves  both directions in
the case where the Hilbert space is complex). 
\begin{thm}
\label{thm:cheney and goldstein Thm 2} Let $K_{1}$ and $K_{2}$
be two nonempty, closed and convex sets in a real Hilbert space. Let
$P_{i}$ denote the orthogonal projection onto $K_{i},\:i\in\left\{ 1,2\right\} $.
Then $x$ is a fixed point of $P_{1}P_{2}$ if and only if $x$ is
a point of $K_{1}$ nearest $K_{2}$. Moreover, in this case $\|x-P_{2}x\|=dist(K_{1},K_{2})$. 
\end{thm}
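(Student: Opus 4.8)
The plan is to prove the two implications separately and to extract the ``moreover'' identity along the way, relying on two standard facts about the metric projection onto a nonempty closed and convex set $C$: the uniqueness of the nearest point recalled in Section~\ref{sec:Preliminaries}, and the variational (obtuse-angle) characterization stating that $p=P_{C}z$ if and only if $p\in C$ and $\langle z-p,c-p\rangle\le0$ for every $c\in C$ (see \cite{BC17}); nonexpansiveness from Proposition~\ref{prop:P_C=00003DNE} is kept in reserve. Throughout I abbreviate $\delta:=dist(K_{1},K_{2})$ and, for a given candidate $x$, set $y:=P_{2}x$.

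For the easier implication I would assume that $x$ is a point of $K_{1}$ nearest $K_{2}$, i.e. $x\in K_{1}$ and $d(x,K_{2})=\delta$. Then $\|x-y\|=d(x,K_{2})=\delta$, and since $y\in K_{2}$ the chain $\delta\le d(y,K_{1})\le\|y-x\|=\delta$ forces $d(y,K_{1})=\|y-x\|$. Thus $x$ attains the distance from $y$ to $K_{1}$, so by uniqueness of the projection $x=P_{1}y=P_{1}P_{2}x$, i.e. $x$ is a fixed point. The same computation already delivers the ``moreover'' claim, because $\|x-P_{2}x\|=\|x-y\|=\delta$.

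For the converse I would assume $P_{1}P_{2}x=x$, so that $x=P_{1}y$ with $y=P_{2}x$; in particular $x\in K_{1}$ and $y\in K_{2}$. Setting $w:=x-y$ and applying the variational characterization to $x=P_{1}y$ and to $y=P_{2}x$ gives $\langle w,u-x\rangle\ge0$ for all $u\in K_{1}$ and $\langle w,y-v\rangle\ge0$ for all $v\in K_{2}$. Decomposing $u-v=(u-x)+(x-y)+(y-v)$ and pairing with $w$ then yields $\langle w,u-v\rangle\ge\|w\|^{2}$, after which Cauchy--Schwarz upgrades this to $\|x-y\|\le\|u-v\|$ for every $u\in K_{1}$ and $v\in K_{2}$ (the degenerate case $w=0$ being immediate, as then $x=y\in K_{1}\cap K_{2}$). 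Hence $\|x-y\|\le\delta$; combined with the trivial bound $\delta\le d(x,K_{2})=\|x-y\|$ coming from $x\in K_{1}$, I conclude $d(x,K_{2})=\|x-y\|=\delta$, so $x$ is nearest $K_{2}$ and the ``moreover'' identity again holds.

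The genuinely substantive step is the converse, and specifically the simultaneous use of the two obtuse-angle inequalities: neither projection identity alone controls the distance to the other set, and the interlocking of the inequality for $P_{1}$ at $x$ with the inequality for $P_{2}$ at $y$ --- made visible by the three-term decomposition of $u-v$ --- is what pushes $\|x-y\|$ below every inter-set distance. I expect this combination, rather than either implication in isolation, to be the crux; the forward direction and the uniqueness argument are routine once the projection toolkit is in place.
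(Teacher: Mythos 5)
Your argument is correct and complete: the forward direction via uniqueness of the nearest point, the converse via the two obtuse-angle inequalities combined through the decomposition $u-v=(u-x)+(x-y)+(y-v)$ and Cauchy--Schwarz, and the degenerate case $w=0$ are all handled properly, and the ``moreover'' identity $\|x-P_{2}x\|=dist(K_{1},K_{2})$ falls out of both directions as you say. There is, however, nothing in the paper to compare it against: the authors state this result as a quotation of Theorem~2 of Cheney and Goldstein \cite{cheney1959proximity} and give no proof, only the citation (they likewise point to \cite{GoldburgMarks1985jour} and \cite{YoulaVelasco1986jour} for alternative proofs under extra hypotheses). Your write-up is essentially the classical Cheney--Goldstein argument reconstructed from the variational characterization of the projection (\cite[Theorem 1.2.4]{Ceg-book}, also in \cite{BC17}), so it is a faithful self-contained substitute for the external reference rather than a departure from anything in the paper.
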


The following result is also due to Cheney and Goldstein in \cite[Theorem 4]{cheney1959proximity}. 
\begin{thm}
\label{thm:Cheney and Goldstein Thm 4} Let $K_{1}$ and $K_{2}$
be two nonempty, closed and convex sets in a real Hilbert space. Let
$P_{i}$ be the orthogonal projection onto $K_{i}$, $i\in\left\{ 1,2\right\} $
and let $Q:=P_{1}P_{2}$. If either one of the sets $K_{1}$ and $K_{2}$
is compact, or one of these sets is finite-dimensional and the distance
between these sets is attained, then for each $x$ in the space the
sequence $\left\{ Q^{k}\left(x\right)\right\} _{k=1}^{\infty}$ converges
to a fixed point of $Q$. 
\end{thm}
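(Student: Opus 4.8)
The plan is to exploit the special composite structure $Q=P_{1}P_{2}$ through two ingredients: Fej\'er monotonicity of the orbit with respect to $\mathrm{Fix}(Q)$, and a strict-descent property of the function ``distance to $K_{2}$'' that is rigid precisely at fixed points. First I would record that $Q=P_{1}P_{2}$ is nonexpansive by Lemma~\ref{lem:comp of contra is a contra}(2) together with Proposition~\ref{prop:P_C=00003DNE}(1), and that $\mathrm{dist}(K_{1},K_{2})$ is attained under either hypothesis: in the compact case one minimizes the continuous map $b\mapsto d(b,K_{1})=\|b-P_{1}b\|$ over the compact set to obtain an attaining pair, while in the finite-dimensional case attainment is assumed outright. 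By Theorem~\ref{thm:cheney and goldstein Thm 2}, attainment of the distance is equivalent to $\mathrm{Fix}(Q)\neq\emptyset$. Fixing $p\in\mathrm{Fix}(Q)$ and writing $x_{k}:=Q^{k}x$, nonexpansivity gives $\|x_{k+1}-p\|\le\|x_{k}-p\|$, so $\{x_{k}\}$ is bounded; since $P_{2}$ is nonexpansive, $\{P_{2}x_{k}\}$ is bounded as well.

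Next I would introduce the continuous (indeed $1$-Lipschitz) function $f(y):=d(y,K_{2})=\|y-P_{2}y\|$ and prove the rigidity lemma that drives the whole argument. For $y\in K_{1}$, setting $w:=P_{2}y$, the estimates
\[
f(Qy)\le\|Qy-w\|=\|P_{1}w-w\|=d(w,K_{1})\le\|w-y\|=f(y)
\]
hold, the first because $w\in K_{2}$ and the last because $y\in K_{1}$. If $f(Qy)=f(y)$ then equality propagates to $d(w,K_{1})=\|w-y\|$, and uniqueness of the projection forces $y=P_{1}w=Qy$. Thus, for $y\in K_{1}$, the equality $f(Qy)=f(y)$ implies $Qy=y$. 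Since $x_{k}\in K_{1}$ for every $k\ge1$, the inequality $f(x_{k+1})\le f(x_{k})$ shows that $\{f(x_{k})\}$ is non-increasing and bounded below, hence convergent to some $\delta\ge0$.

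It remains to extract a convergent subsequence and identify its limit as a fixed point. In each of the two cases I would produce a subsequence $x_{m_{j}}\to z$ with $z\in K_{1}$: when the compact or finite-dimensional set is $K_{1}$, the bounded sequence $\{x_{k}\}\subseteq K_{1}$ supplies it directly; when it is $K_{2}$, I first extract a convergent subsequence of $\{P_{2}x_{k}\}\subseteq K_{2}$ and then pass it through the continuous operator $P_{1}$ via $x_{k+1}=P_{1}P_{2}x_{k}$. Continuity of $f$ gives $f(z)=\delta$, while continuity of $Q$ gives $x_{m_{j}+1}=Qx_{m_{j}}\to Qz$ and hence $f(Qz)=\lim_{j}f(x_{m_{j}+1})=\delta$. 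Therefore $f(Qz)=f(z)$, and the rigidity lemma yields $Qz=z$, that is, $z\in\mathrm{Fix}(Q)$.

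Finally, since $z$ is a fixed point, Fej\'er monotonicity applies: $\{\|x_{k}-z\|\}$ is non-increasing, and because the subsequence $\|x_{m_{j}}-z\|\to0$, the whole sequence satisfies $\|x_{k}-z\|\to0$, so $Q^{k}x\to z$. I expect the main obstacle to be the rigidity lemma --- pinning down that equality in the descent of $f$ can occur only at a fixed point, which is exactly what upgrades mere subsequential boundedness into convergence to $\mathrm{Fix}(Q)$; once that is in place, the compactness bookkeeping across the two cases and the Fej\'er wrap-up are routine.
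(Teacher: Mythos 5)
Your proof is correct. Note, however, that the paper does not actually prove this statement --- it imports it verbatim from Cheney and Goldstein \cite[Theorem 4]{cheney1959proximity} --- so there is no internal proof to compare against; what you have written is a sound self-contained reconstruction of the classical argument. Each ingredient checks out: attainment of the distance under either hypothesis and the resulting nonemptiness of $\mathrm{Fix}(Q)$ via Theorem \ref{thm:cheney and goldstein Thm 2}; the rigidity lemma (for $y\in K_{1}$, equality $d(Qy,K_{2})=d(y,K_{2})$ collapses the chain $f(Qy)\le d(P_{2}y,K_{1})\le f(y)$ and uniqueness of the projection onto $K_{1}$ forces $y=P_{1}P_{2}y$), which is a clean variant of the equality case of nonexpansiveness that Cheney--Goldstein themselves use (cf.\ Proposition \ref{prop:P_C=00003DNE}(2)); the extraction of a subsequential limit $z\in K_{1}$ in all four subcases, including the routing of a convergent subsequence of $\{P_{2}x_{k}\}$ through the continuous $P_{1}$ when the compact or finite-dimensional set is $K_{2}$; and the Fej\'er-monotone upgrade from subsequential to full convergence. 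No gaps.
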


The next propositions describe the fixed point set of weighted sums and compositions of certain operators. 
   
\begin{prop}
\label{prop:Fix(combi)=00003D00003Dinter(Fix)}\cite[Theorem 2.1.14]{Ceg-book}
Given $m\in\mathbb{N},$ suppose that $U_{i}:X\rightarrow\mathcal{H},$
$i\in\left\{ 1,2,\ldots,m\right\} $ are  NE operators with a common
fixed point and let $U:=\sum_{i=1}^{m}w_{i}U_{i}$, where $w=(w_{i})_{i=1}^{m}$is
a positive weight vector.  Then 
\end{prop}

\noindent \textit{
\begin{equation}
\textnormal{Fix}U=\bigcap_{i=1}^{m}\textnormal{Fix}U_{i}.
\end{equation}
}

\begin{prop}
\label{prop:Fix(compo)=00003D00003Dinter(Fix)} \cite[Corollary 4.51]{BC17}
Given $m\in\mathbb{N},$ suppose that $T_{i}:\mathcal{H}\rightarrow\mathcal{H},$
$i\in\left\{ 1,2,\ldots,m\right\} $ are averaged  NE operators with a common
fixed point. If $T:=T_{m}T_{m-1}\cdots T_{1}$, then 
\begin{equation}
\textup{Fix}T=\bigcap_{i=1}^{m}\mathrm{Fix}T_{i}.
\end{equation}
\end{prop}

We now recall two basic notions of convergence. 
\begin{defn}
\label{def:uniformly convergence} Let $(X,d_{X})$ and $(Y,d_{Y})$
be two metric spaces and let $\left\{ f_{k}\right\} _{k=1}^{\infty}$,
$f_{k}:X\rightarrow Y$ be a sequence of functions. We say that $\left\{ f_{k}\right\} _{k=1}^{\infty}$

(i) converges pointwise to the function $f:X\rightarrow Y$ if for
every $\varepsilon>0$ and every $x\in X$ there exists a positive
integer $N_{\varepsilon}$ such that for all $k>N_{\varepsilon}$,
\begin{equation}
d_{Y}(f_{k}(x),f(x))<\varepsilon.
\end{equation}
(ii) converges uniformly to the function $f:X\rightarrow Y$ if for
every $\varepsilon>0$, there exists a positive integer $N_{\varepsilon}$
such that for all $k>N_{\varepsilon}$ and all $x\in X$, 
\begin{equation}
d_{Y}(f_{k}(x),f(x))<\varepsilon.
\end{equation}
\end{defn}

\section{A sufficient condition for the existence of a unique best approximation
pair\label{sec:Sufficient condition}}

In this section, we present a sufficient condition which ensures that
there is a unique best approximation pair $\left(a,b\right)\in A\times B$.
We note that Lemma \bref{lem:Lemma 13} below, which should be known
(and its proof is very simple and follows directly from the definition),
holds in any  normed space, and Proposition \bref{prop:Proposition 6}
below holds in any real inner product space. In what follows the underlying
space will be denoted by $X$. 
\begin{defn}
A subset $C$ of $X$ is called strictly convex if for all $x$ and
$y$ in $C$ and all $t\in\left(0,1\right)$, the point $tx+\left(1-t\right)y$
is in the interior of $C$. 
\end{defn}

\begin{lem}
\label{lem:Lemma 13} Given $m\in\mathbb{N}$ and $m$ strictly convex
subsets $C_{1},C_{2}\ldots,C_{m}$ of $X$, their intersection is
also strictly convex. 
\end{lem}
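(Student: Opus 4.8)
The plan is to argue directly from the definition of strict convexity, reducing the statement to a single elementary topological fact about finite intersections of open sets. Write $C:=\bigcap_{i=1}^{m}C_{i}$. To show that $C$ is strictly convex I fix arbitrary $x,y\in C$ and $t\in(0,1)$, and I must show that the point $p:=tx+(1-t)y$ lies in the interior of $C$.

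First I would observe that since $x,y\in C$, we have $x,y\in C_{i}$ for every $i\in\{1,2,\ldots,m\}$. Applying the strict convexity of each individual set $C_{i}$ to the pair $x,y$ and the same parameter $t$, I conclude that $p\in\operatorname{int}C_{i}$ for each $i$. Consequently $p\in\bigcap_{i=1}^{m}\operatorname{int}C_{i}$.

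The remaining step is to pass from membership in the intersection of the interiors to membership in the interior of the intersection, i.e.\ to justify $\bigcap_{i=1}^{m}\operatorname{int}C_{i}\subseteq\operatorname{int}C$. This is where the hypothesis that $m$ is \emph{finite} is used: the set $\bigcap_{i=1}^{m}\operatorname{int}C_{i}$ is a finite intersection of open sets and is therefore open, and it is clearly contained in $\bigcap_{i=1}^{m}C_{i}=C$; hence it is contained in the largest open subset of $C$, namely $\operatorname{int}C$. Combining this with the previous step gives $p\in\operatorname{int}C$, and since $x,y\in C$ and $t\in(0,1)$ were arbitrary, $C$ is strictly convex.

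I expect the only point requiring any care to be the final inclusion, and even there the subtlety is conceptual rather than computational: the identity $\operatorname{int}\bigl(\bigcap_{i=1}^{m}C_{i}\bigr)=\bigcap_{i=1}^{m}\operatorname{int}C_{i}$ holds precisely because the index set is finite (an infinite intersection of open sets need not be open), and I would flag this explicitly so the reader sees where finiteness enters. No appeal to the norm or inner-product structure of $X$ is needed, which matches the remark preceding the lemma that it is valid in any real normed space.
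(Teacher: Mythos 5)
Your proof is correct and follows essentially the same route as the paper: both arguments place $tx+(1-t)y$ in $\operatorname{int}C_{i}$ for every $i$ via the strict convexity of each $C_{i}$, and then pass to the interior of the intersection using finiteness. The paper merely makes your final topological step concrete by taking a ball of radius $\min\{\rho_{i}\mid i\in\{1,2,\ldots,m\}\}>0$ inside each $C_{i}$, which is just the metric-space proof of the fact you invoke, that a finite intersection of open sets is open.
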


\begin{example}
Examples of bounded strictly convex sets: balls, ellipses, the intersection
of a paraboloid with a bounded strictly convex set, the intersection
of one arm of a hyperboloid with a bounded strictly convex set, level-sets
of coercive and continuous strictly convex functions (namely, sets
of the form $\left\{ x\in X\mid f\left(x\right)\leq\alpha\right\} $,
where $\alpha\in\mathbb{R}$ is fixed, $f:X\rightarrow\mathbb{R}$
is continuous and satisfies $\lim_{\left\Vert x\right\Vert \rightarrow\infty}f\left(x\right)=\infty$
(coercivity), and $f\left(tx+\left(1-t\right)y\right)<tf\left(x\right)+\left(1-t\right)f\left(y\right)$
for all $t\in\left(0,1\right)$ and $x,y\in X$ (strict convexity)). 
\end{example}

\begin{lem}
\label{lem:Lemma 5} Suppose that $A$ and $B$ are nonempty, closed and convex
subsets of a real inner product space $X$ and suppose that $a\in A$
and $b\in B$ satisfy $\left\Vert a-b\right\Vert =dist\left(A,B\right)>0$.
Then $A$ is contained in the closed half-space $\left\{ y\in X\mid\left\langle y-a,a-b\right\rangle \geq0\right\} $,
and $B$ is contained in the closed half-space $\left\{ y\in X\mid\left\langle y-b,b-a\right\rangle \geq0\right\} $.
Moreover, the point $a$ is a boundary point of $A$, and the point
$b$ is a boundary point of $B$. 
\end{lem}

\begin{proof}
We prove the assertion regarding $A$. The proof regarding $B$ is
similar. Since $\left\Vert a-b\right\Vert =dist\left(A,B\right)$,
we have $a=P_{A}\left(b\right)$. Thus, from the well known characterization
of the orthogonal projection \cite[Theorem 1.2.4]{Ceg-book}, any
$y\in A$ satisfies $\left\langle y-a,a-b\right\rangle \geq0$, namely
$A\subseteq\left\{ y\in X\mid\left\langle y-a,a-b\right\rangle \geq0\right\} $.

To see that $a$ is a boundary point of $A$, let $D$ be an arbitrary
(non-degenerate) ball centered at $a$, with say, radius $r>0$. Since
$a\in A\cap D$, it remains to show that $D$ contains points outside
$A$. Indeed, let $t\in\left(0,r/\left\Vert b-a\right\Vert \right)$.
Then $a+t\left(b-a\right)\in D$ but $\left\langle \left(a+t\left(b-a\right)\right)-a,a-b\right\rangle =-t\left\Vert a-b\right\Vert ^{2}<0$,
and so $a+t\left(b-a\right)$ is not in the set $\left\{ y\in X\mid\left\langle y-a,a-b\right\rangle \geq0\right\} $.
Since $A\subseteq\left\{ y\in X\mid\left\langle y-a,a-b\right\rangle \geq0\right\} $,
by the previous paragraph, we conclude that $a+t\left(b-a\right)$
cannot be in $A$, as required. Therefore, $a$ is a boundary point
of $A$. 
\end{proof}
\begin{prop}
\label{prop:Proposition 6} Let $X$ be a real inner product space and suppose that $A$ and $B$ are nonempty subsets of $X$.
(i) There is at least one best approximation pair $\left(a,b\right)\in A\times B$
if and only if $dist\left(A,B\right)$ is attained. (ii) If $A$ 
and $B$ are closed, strictly convex, and $dist\left(A,B\right)>0$,
then there is at most one best approximation pair $\left(a,b\right)\in A\times B$ relative to $(A,B)$. 
(iii) If $A$ and $B$ are closed, strictly convex, satisfy $dist\left(A,B\right)>0$, and the distance between
them is attained, then there is exactly one best approximation pair
relative to $(A,B)$. In particular, there is exactly
one best approximation pair relative to $(A,B)$ whenever
$A\subseteq X$ and $B\subseteq X$ are nonempty, strictly convex,
compact and $dist\left(A,B\right)>0$. 
\end{prop}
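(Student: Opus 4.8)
The plan is to treat the three parts in turn, with the genuine work concentrated in part (ii). Part (i) follows immediately from the definitions recalled in Section \ref{sec:Preliminaries}: to say that $dist(A,B)$ is attained means precisely that there exist $a \in A$ and $b \in B$ with $\|a - b\| = dist(A,B)$, and such a pair is by definition a best approximation pair relative to $A$ and $B$.

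For part (ii) I would argue by contradiction, assuming that $(a_1, b_1)$ and $(a_2, b_2)$ are two \emph{distinct} best approximation pairs, so that $\|a_1 - b_1\| = \|a_2 - b_2\| = dist(A,B) =: d > 0$. Since the pairs differ, at least one of $a_1 \neq a_2$ or $b_1 \neq b_2$ must hold, and because the hypotheses are symmetric in $A$ and $B$ I may assume $a_1 \neq a_2$. Setting $m_A := \tfrac{1}{2}(a_1 + a_2)$ and $m_B := \tfrac{1}{2}(b_1 + b_2)$, convexity (which strict convexity entails) gives $m_A \in A$ and $m_B \in B$, and the triangle inequality gives
\[
\|m_A - m_B\| = \left\| \tfrac{1}{2}(a_1 - b_1) + \tfrac{1}{2}(a_2 - b_2) \right\| \leq \tfrac{1}{2}\|a_1 - b_1\| + \tfrac{1}{2}\|a_2 - b_2\| = d .
\]
On the other hand $m_A \in A$ and $m_B \in B$ force $\|m_A - m_B\| \geq dist(A,B) = d$, so in fact $\|m_A - m_B\| = d$ and $(m_A, m_B)$ is itself a best approximation pair.

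The contradiction then comes from two incompatible descriptions of $m_A$. Because $a_1 \neq a_2$ and $A$ is strictly convex, the midpoint $m_A$ lies in the interior of $A$; but applying Lemma \ref{lem:Lemma 5} to the best approximation pair $(m_A, m_B)$ (where the hypothesis $d > 0$ is exactly what is needed) shows that $m_A$ is a boundary point of $A$. Since no point can be both interior and on the boundary, the assumption of two distinct pairs is untenable, proving that at most one best approximation pair exists.

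Finally, part (iii) is simply the conjunction of (i) and (ii): under its hypotheses there is at least one best approximation pair and at most one, hence exactly one. For the concluding ``in particular'' assertion it remains only to note that when $A$ and $B$ are compact the continuous function $(u,v) \mapsto \|u - v\|$ attains its infimum on the compact product $A \times B$, so $dist(A,B)$ is attained and (iii) applies verbatim. I expect the main obstacle to lie entirely in part (ii): the decisive observation is that averaging two best approximation pairs yields a third whose first coordinate is forced into the interior by strict convexity, in direct conflict with the boundary conclusion of Lemma \ref{lem:Lemma 5}, while the reduction to the case $a_1 \neq a_2$ (using symmetry) is what licenses the strict-convexity step.
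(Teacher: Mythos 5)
Your proposal is correct and follows essentially the same route as the paper's own proof: form a convex combination of the two pairs, show via the triangle inequality that it is again a best approximation pair, and then derive a contradiction between the interior-point conclusion of strict convexity and the boundary-point conclusion of Lemma \ref{lem:Lemma 5}. The only cosmetic differences are that you use the midpoint $t=\tfrac{1}{2}$ rather than a general $t\in(0,1)$ and invoke the $A$--$B$ symmetry instead of running the argument separately for both coordinates.
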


\begin{proof}
Part (i) is just a restatement of the assertion that $dist\left(A,B\right)$
is attained.

We now turn to Part (ii). Suppose that $\left(a,b\right)\in A\times B$
and $\left(\tilde{a},\tilde{b}\right)\in A\times B$ are two arbitrary
best approximation pairs. We will show that $\left(a,b\right)=\left(\tilde{a},\tilde{b}\right)$.
Since both $\left(a,b\right)$ and $\left(\tilde{a},\tilde{b}\right)$
are best approximation pairs relative to $(A,B)$, we have $\left\Vert a-b\right\Vert =dist\left(A,B\right)=\left\Vert \tilde{a}-\tilde{b}\right\Vert $.
Let $t\in\left(0,1\right)$ be arbitrary and let  $a_{t}:=(1-t)a+t\tilde{a}$
and  $b_{t}:=(1-t)b+t\tilde{b}$. By the convexity of $A$ and $B$
we have $a_{t}\in A$ and $b_{t}\in B$, and therefore, in particular,
$dist\left(A,B\right)\leq\left\Vert a_{t}-b_{t}\right\Vert $. On
the other hand, the triangle inequality and the assumption that $\left(a,b\right)$
and $\left(\tilde{a},\tilde{b}\right)$ are best approximation pairs relative to $(A,B)$ imply that  $\left\Vert a_{t}-b_{t}\right\Vert \leq(1-t)\left\Vert a-b\right\Vert +t\left\Vert \tilde{a}-\tilde{b}\right\Vert =(1-t)dist\left(A,B\right)+tdist\left(A,B\right)=dist\left(A,B\right)$.
Hence, $\left\Vert a_{t}-b_{t}\right\Vert =dist\left(A,B\right)$
and $\left(a_{t},b_{t}\right)$ is a best approximation pair too.
Thus, Lemma \bref{lem:Lemma 5} implies that $a_{t}$ is a boundary
point of $A$ and $b_{t}$ is a boundary point of $B$.

Suppose, by way of contradiction, that $a\neq\tilde{a}$. Then $a_{t}$
is strictly inside the line segment $\left[a,\tilde{a}\right]$, and
hence the strict convexity of $A$ implies that $a_{t}$ is in the
interior of $A$, contradicting the claim proved in the previous paragraphs
that $a_{t}$ is a boundary point of $A$. Consequently $a=\tilde{a}$,
and similarly $b=\tilde{b}$. Therefore, $\left(a,b\right)=\left(\tilde{a},\tilde{b}\right)$,
and hence all the best approximation pairs (if they exist) are equal.
Thus, there is at most one best approximation pair.

Now we prove Part (iii). From Part (ii)
we know that there is at most one best approximation pair, and from Part
(i) there is at least one best approximation pair. We conclude therefore
that there is exactly one best approximation pair $\left(a,b\right)\in A\times B$ relative to $(A,B)$. This conclusion holds when one considers the final case mentioned
in Part (iii) since, as is well-known and easily follows from compactness,
$dist\left(A,B\right)$ is attained whenever $A$ and $B$ are nonempty
and compact (one simply takes a cluster point of $\left\{ \left(a^{k},b^{k}\right)\right\} _{k=1}^{\infty}$,
where $\left(a^{k},b^{k}\right)\in A\times B$ satisfies $\left\Vert a^{k}-b^{k}\right\Vert <dist\left(A,B\right)+\left(1/k\right)$
for all $k\in\mathbb{N}$). 
\end{proof}
\begin{rem}
The strict convexity assumption in Proposition \bref{prop:Proposition 6}(iii)
is sufficient but not necessary, as shown in Figure \bref{fig:BestApproxPairNonStrictlyConvex}. 
\end{rem}

\begin{figure}[t]
\begin{minipage}[t]{1\textwidth}
\begin{center}{\includegraphics[trim=140 680 200 40, clip=true, scale=1]{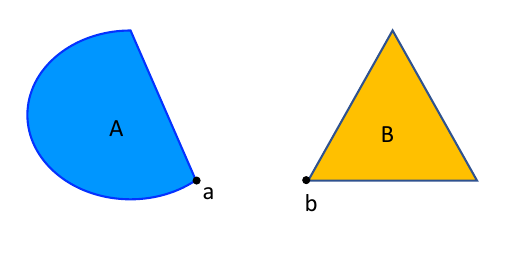}}
\end{center}
 \caption{A case where there is a unique best approximation pair $(a,b)$ despite the fact that $A$ and $B$ are not strictly convex}
\label{fig:BestApproxPairNonStrictlyConvex}
\end{minipage}
\end{figure}

\section{The Simultaneous-HLWB projection operator\label{sec:Features}}

Inspired by the iterative process of \beqref{eq:simult-HLWB} which
represents the simultaneous HLWB (S-HLWB) algorithm, we define the
Simultaneous-HLWB operator and introduce, in the sequel, some of its
properties. Given are two families of nonempty, closed and convex
sets $\mathcal{A}:=\left\{ A_{i}\right\} _{i=1}^{I}$ and $\mathcal{B}:=\left\{ B_{j}\right\} _{j=1}^{J}$
in a Euclidean space $\mathcal{H}$ for some positive integers $I$
and $J$. Assume that $A:=\bigcap_{i=1}^{I}A_{i}\neq\emptyset$ and
$B:=\bigcap_{j=1}^{J}B_{j}\neq\emptyset$ but $A\bigcap B=\emptyset$,
where $A_{i},B_{j}\subseteq B\left[0,\rho\right]$ for some $\rho>0$.
Our overall goal is to find a best approximation pair relative to
$A$ and $B$. 
\begin{defn}
\begin{flushleft}
We denote by 
\begin{equation}
M_{\mathcal{\mathcal{G}},\tau,w}[d]:=\tau d+\left(1-\tau\right)\sum_{\ell=1}^{L}w_{\ell}P_{C_{\ell}},\,L\in\mathbb{N}\label{eq:M_c}
\end{equation}
the operators $M_{\mathcal{\mathcal{G}},\tau}[d]:\mathcal{H\rightarrow H}$,
with a fixed anchor point $d\in\mathcal{H},$ with respect to the
family $\mathcal{G}:=\left\{ C_{\ell}\right\} _{\ell=1}^{L}$ of nonempty
closed convex sets such that $\bigcap_{\ell=1}^{L}C_{\ell}\neq\emptyset$,
such that $\tau\in(0,1)$  and such that  $w=(w_{\ell})_{\ell=1}^{L}$
is a positive weight vector. We call such operators \textbf{simultaneous-HLWB
operators}. I.e., for $x\in\mathcal{H},$ 
\begin{equation}
M_{\mathcal{\mathcal{G}},\tau,w}[d](x):=\tau d+\left(1-\tau\right)\sum_{\ell=1}^{L}w_{\ell}P_{C_{\ell}}(x).\label{eq:M_c(x)}
\end{equation}
\par\end{flushleft}
\end{defn}

\section{The alternating simultaneous HLWB algorithm\label{sec:The method} }

Motivated by the strategy of the original algorithm in \cite[Algorithm in Equation (2)]{bauschke1996approximation},
we present here our new method for finding a best approximation pair
in a finite-dimensional Hilbert space, namely the alternating simultaneous
HLWB algorithm (A-S-HLWB). But before doing so, at the end of this
section, we need a technical preparation.

First we look at products of S-HLWB operators. 
\begin{defn}
\label{def:generic-Qc} Given a natural number $L$,  a family  $\mathcal{\mathcal{G}}:=\left\{ C_{\ell}\right\} _{\ell=1}^{L}$
of nonempty, closed and convex sets such that  $C:=\bigcap_{\ell=1}^{L}C_{\ell}\neq\emptyset$,
a positive weight vector $w=(w_{\ell})_{\ell=1}^{L}$, a sequence  $\tau^{\mathcal{G}}:=\left\{ \tau_{t}^{\mathcal{\mathcal{G}}}\right\}_{t=0}^{\infty}$ of steering parameters, and some anchor
point $d\in\mathcal{H}$, define, for any $q=0,1,2,\ldots,$ the operators
$Q_{\mathcal{\mathcal{G}},\tau^{\mathcal{G}},q,w}[d]$, associated with $\mathcal{\mathcal{G}},$
by

\begin{align}
Q_{\mathcal{\mathcal{G}},\tau^{\mathcal{G}},q,w}[d]:= & \prod_{t=0}^{q}M_{\mathcal{\mathcal{G}},\tau_{t}^{\mathcal{\mathcal{G}}},w}[d]=M_{\mathcal{\mathcal{G}},\tau_{q}^{\mathcal{\mathcal{G}}},w}[d]\left(M_{\mathcal{\mathcal{G}},\tau_{q-1}^{\mathcal{G}},w}[d]\left(\cdots\left(M_{\mathcal{\mathcal{G}},\tau_{0}^{\mathcal{\mathcal{G}}},w}[d]\right)\right)\right).\label{eq:Q_C}
\end{align}
\end{defn}

When we choose in Definition \bref{def:generic-Qc} $\mathcal{\mathcal{G}}:=\mathcal{A}$, a steering parameter sequence $\tau^{\mathcal{A}}:=\left\{ \tau_{t}^{\mathcal{\mathcal{A}}}\right\}_{t=0}^{\infty}$,
$q:=s\in\mathbb{N}$, $L:=I,$ a positive weight vector $\alpha=(\alpha_{i})_{i=1}^{I}$,
and $d=u\in\mathcal{H}$, we obtain from the family $\mathcal{A}:=\left\{ A_{i}\right\} _{i=1}^{I}$
and its associated steering parameters sequence  $\left\{ \tau_{g}^{\mathcal{A}}\right\} _{g=0}^{\infty}$
the operators  $Q_{\mathcal{A},\tau^{\mathcal{A}},s,\alpha}[u]$ as follows,

\begin{equation}
Q_{\mathcal{A},\tau^{\mathcal{A}},s,w}[u]:=\prod_{t=0}^{s}M_{\mathcal{A},\tau_{t}^{\mathcal{A}},w}[u].\label{eq:QA}
\end{equation}

When we choose in the Definition \bref{def:generic-Qc} $\mathcal{\mathcal{G}}:=\mathcal{B}$, a steering parameter sequence $\tau^{\mathcal{B}}:=\left\{ \tau_{t}^{\mathcal{\mathcal{B}}}\right\}_{t=0}^{\infty}$, 
$q:=r\in\mathbb{N}$, $L:=J,$ a positive weight vector $\beta=(\beta_{j})_{j=1}^{J}$,
and $d=v\in\mathcal{H},$ we obtain from the family $\mathcal{B}:=\left\{ B_{i}\right\} _{i=1}^{I}$
and its associated steering parameters sequence  $\left\{ \tau_{h}^{\mathcal{B}}\right\} _{h=0}^{\infty}$
the operators  $Q_{\mathcal{B},\tau^{\mathcal{B}},r,\beta}[v]$ as follows, 

\begin{equation}
Q_{\mathcal{B},\tau^{\mathcal{B}},r,\beta}[v]:=\prod_{t=0}^{r}M_{\mathcal{B},\tau_{t}^{\mathcal{B}},\beta}[v].\label{eq:QB}
\end{equation}

Each of these operators is formed by a finite composition of operators\textcolor{black}{{}
and we will prove the existence of fixed points for them.}

In the following lemma we show the relation between the operator (\bref{eq:Q_C}) and the iterative process (\bref{eq:simult-HLWB}). 
\begin{lem}\label{lem:Xn=00003D00003DQn} Let $X$ be a nonempty, closed and convex subset of $\mathcal{H}$. Given a natural number $L$, suppose that $\mathcal{G}:=\{C_1,\ldots,C_L\}$ is a family of nonempty, closed and convex subsets of $X$. Let $T_{\ell}:=P_{\ell}:=P_{C_{\ell}}$
for all $\ell\in\left\{ 1,2,\ldots,L\right\}$. Given two points $d$ and $x^0$ in $X$, a positive weight vector $w$ and a sequence $\{\tau_k\}_{k=0}^{\infty}$ of steering parameters, consider the sequence $\left\{ x^{k}\right\}_{k=0}^{\infty}$ generated by (\bref{eq:simult-HLWB}), namely 
\begin{equation}
x^{k+1}=\tau_{k}d+\left(1-\tau_{k}\right)\sum_{\ell=1}^{L}w_{\ell}P_{\ell}x^{k}.\label{eq:X-sequence}
\end{equation}
Given $y^0\in X$ and another sequence $\tau^{\mathcal{G}}:=\{\tau_k^{\mathcal{G}}\}_{k=0}^{\infty}$ of steering parameters, consider the sequence $\{y^k\}_{k=0}^{\infty}$ defined using the operator (\bref{eq:Q_C}) of Definition \bref{def:generic-Qc}, namely 
\begin{equation}
y^{k+1}:=Q_{\mathcal{\mathcal{G}},\tau^{\mathcal{G}},k,w}[d]\left(y^{0}\right)=\prod_{t=0}^{k}M_{\mathcal{\mathcal{G}},\tau_{t}^{\mathcal{\mathcal{G}}},w}[d]\left(y^{0}\right).\label{eq:Y-sequence}
\end{equation}
If $y^{0}=x^{0}$ and\textup{ $\tau_{k}=\tau_{k}^{\mathcal{G}}$} for all nonnegative integers\textup{ $k$, then} $y^{k}=x^{k}$ for every $k\in\mathbb{N}\cup\{0\}$. In addition, if $C:=\cap_{\ell=1}^L C_{\ell}\neq\emptyset$, then 
\begin{equation}
\lim_{k\rightarrow\infty}x^{k}=\lim_{k\rightarrow\infty}y^{k}=P_{C}d.\label{eq:5.6a}
\end{equation}
\end{lem}

\begin{proof}
We first show, by mathematical induction, that $x^{k}=y^{k}$ for
all $k\in\mathbb{N}\cup\{0\}$. By (\bref{eq:X-sequence}) we have 
\begin{equation}
x^{1}=\tau_{0}d+\left(1-\tau_{0}\right)\sum_{\ell=1}^{L}w_{\ell}P_{\ell}(x^{0}),\label{eq:5.7a}
\end{equation}
and from (\bref{eq:Q_C}) and ({eq:X-sequence}) we have, for all $k\in\mathbb{N}$,
\begin{equation}
x^{k+1}=M_{\mathcal{\mathcal{G}},\tau_{k},w}[d]\left(x^{k}\right)=\tau_{k}d+\left(1-\tau_{k}\right)\sum_{\ell=1}^{L}w_{\ell}P_{\ell}\left(x^{k}\right).\label{eq:Xn}
\end{equation}
By (\bref{eq:Y-sequence}) we have 
\begin{equation}
y^{1}=M_{\mathcal{\mathcal{G}},\tau_{0}^{\mathcal{\mathcal{G}}},w}[d]\left(y^{0}\right)=\tau_{0}^{\mathcal{G}}d+\left(1-\tau_{0}^{\mathcal{G}}\right)\sum_{\ell=1}^{L}w_{\ell}P_{C_{\ell}}(y^{0}),\label{eq:5.9a}
\end{equation}
and from (\bref{eq:Y-sequence}) again we have, for all $k\in\mathbb{N}$, 
\begin{multline}
y^{k+1}=M_{\mathcal{\mathcal{G}},\tau_{k}^{\mathcal{G}},w}[d]\left(\prod_{t=0}^{k-1}M_{\mathcal{\mathcal{G}},\tau_{t}^{\mathcal{\mathcal{G}}},w}[d]\left(y^{0}\right)\right)=M_{\mathcal{\mathcal{G}},\tau_{k}^{\mathcal{G}},w}[d]\left(y^{k}\right)\\
=\tau_{k}^{\mathcal{G}}d+\left(1-\tau_{k}^{\mathcal{G}}\right)\sum_{\ell=1}^{L}w_{\ell}P_{\ell}\left(y^{k}\right).\label{eq:Yn}
\end{multline}
From (\bref{eq:5.7a}), (\bref{eq:5.9a}) and the assumptions that $x^{0}=y^{0}$
and $\tau_{0}=\tau_{0}^{\mathcal{G}}$, it follows that $x^{1}=y^{1}$.
Now we assume that the equality between $x^{\ell}$ and $y^{\ell}$
holds for all integers $\ell$ from $0$ to $k$. From (\bref{eq:Xn}),
(\bref{eq:Yn}) and the assumptions that $x^{k}=y^{k}$ and $\tau_{k}=\tau_{k}^{\mathcal{G}}$,
it follows that $x^{k+1}=y^{k+1}$ as well, as required.  

Finally, since, according to Proposition \bref{prop:P_C=00003D00003DNE}, we have $C_{\ell}=\textnormal{Fix}(T_{\ell})$ for all $\ell\in\{1,\ldots,L\}$, since we assume that $C:=\cap_{\ell=1}^L C_{\ell}\neq\emptyset$, and since  $x^{k}=y^{k}$ for all nonnegative integers $k$, we obtain (\bref{eq:5.6a}) from Theorem ~\bref{cor:simultaneous HLWB} (where there $x:=d$). 
\end{proof}

\noindent We shall employ the operators of (\bref{eq:M_c}) and (\bref{eq:Q_C}),
from here onward, always with an anchor point that is identical with
the initial point on which the operators act. To this end we define
$d:=x$ in (\bref{eq:M_c}) and obtain the new operator

\noindent 
\begin{equation}
\widehat{M}_{\mathcal{\mathcal{G}},\tau,w}(x):=M_{\mathcal{\mathcal{G}},\tau,w}[x](x)=\tau x+\left(1-\tau\right)\sum_{\ell=1}^{L}w_{\ell}P_{C_{\ell}}(x),\label{eq:M_C(d=00003D00003Dx)}
\end{equation}
thus,

\begin{equation}
\widehat{M}_{\mathcal{\mathcal{G}},\tau,w}=\tau\textup{Id}+\left(1-\tau\right)\sum_{\ell=1}^{L}w_{\ell}P_{C_{\ell}}.\label{eq:genericMhat}
\end{equation}

\noindent For the choice $d:=x$ and $\tau^{\mathcal{G}}:=\{\tau_k^{\mathcal{G}}\}_{k=0}^{\infty}$ 
in (\bref{eq:Q_C}) we obtain the new operator 
\begin{equation}
\begin{alignedat}[b]{1}\widehat{Q}_{\mathcal{\mathcal{G}},\tau^{\mathcal{G}},q,w}(x):=Q_{\mathcal{\mathcal{G}},\tau^{\mathcal{G}},q,w}[x](x)= & \prod_{t=0}^{q}M_{\mathcal{\mathcal{G}},\tau_{t}^{\mathcal{\mathcal{G}}},w}[x](x)=\prod_{t=0}^{q}\widehat{M}_{\mathcal{\mathcal{G}},\tau_{t}^{\mathcal{\mathcal{G}}},w}(x)\\
= & \prod_{t=0}^{q}\left(\tau_{t}^{\mathcal{\mathcal{G}}}\textup{Id}+\left(1-\tau_{t}^{\mathcal{\mathcal{G}}}\right)\sum_{\ell=1}^{L}w_{\ell}P_{C_{\ell}}\right)(x).
\end{alignedat}
\label{eq:Q_C(x)=00003D00003DQ_C(x,x)}
\end{equation}
This yields, for the choice $\mathcal{G}=\mathcal{A}$, $q=s\in\mathbb{N}$
and a positive weight vector $\alpha=(\alpha_{i})_{i=1}^{I}$, 

\noindent 
\begin{equation}
\widehat{Q}_{\mathcal{A},\tau^{\mathcal{A}},s,\alpha}(x):=Q_{\mathcal{A},\tau^{\mathcal{A}},s,\alpha}[x](x),\label{eq:Q_A(x)=00003D00003DQ_A(x,x)}
\end{equation}
and, for the choice $\mathcal{G}=\mathcal{\mathcal{B}}$,  $q=r\in\mathbb{N}$
and a positive weight vector $\beta=(\beta{}_{i})_{i=1}^{J}$, 

\noindent 
\begin{equation}
\widehat{Q}_{\mathcal{B},\tau^{\mathcal{B}},r,\beta}(x):=Q_{\mathcal{B},\tau^{\mathcal{B}},r,\beta}[x](x).\label{eq:Q_B(x)=00003D00003DQ_B(x,x)}
\end{equation}

\begin{lem}
\label{lem:M_C=00003D00003DNE}The operator of (\bref{eq:M_C(d=00003D00003Dx)})
and the operator of (\bref{eq:Q_C(x)=00003D00003DQ_C(x,x)}) are NE
operators. As a matter of fact, both of these operators are averaged.
\end{lem}

\begin{proof}
By Lemma \bref{lem:comp of contra is a contra}(i) a convex combination
of NE operators is NE and hence $\sum_{\ell=1}^{L}w_{\ell}P_{C_{\ell}}$
is NE. Since $\tau\in(0,1)$ we see from (\bref{eq:genericMhat}) that
$\widehat{M}_{\mathcal{\mathcal{G}},\tau,w}$ is a convex combination
of the identity operator and a NE operator, namely $\widehat{M}_{\mathcal{\mathcal{G}},\tau,w}$
is averaged. Thus because of Lemma \bref{lem:comp of contra is a contra}(i)
and because the identity operator is NE, also $\widehat{M}_{\mathcal{\mathcal{G}},\tau,w}$
is NE. By Lemma \bref{lem:comp of contra is a contra}(ii) the operator
$\widehat{Q}_{\mathcal{\mathcal{G}},\tau^{\mathcal{G}},q,w}$ is also NE. In fact, $\widehat{Q}_{\mathcal{\mathcal{G}},\tau^{\mathcal{G}},q,w}$
is averaged according to \cite[Proposition 4.44]{BC17} since it
is a finite composition of averaged operators (namely the operators
$\widehat{M}_{\mathcal{\mathcal{G}},\tau_{t}^{\mathcal{\mathcal{G}}},w},\,t\in\left\{ 0,\ldots,q\right\} $). 
\end{proof}
A key point in our development is the property described in the next
lemma about the successive application of these two operators. 
\begin{lem}
\label{lem:QAQB=00003D00003DPAPB} For every $x\in\mathcal{H}$ 

\begin{equation}
\lim_{r\rightarrow\infty}\left(\lim_{s\rightarrow\infty}\left(\widehat{Q}_{\mathcal{B},\tau^{\mathcal{B}},r,\beta}\left(\widehat{Q}_{\mathcal{A},\tau^{\mathcal{A}},s,\alpha}\left(x\right)\right)\right)\right)=P_{B}\left(P_{A}\left(x\right)\right).
\end{equation}
\end{lem}

\begin{proof}
Given $x$ in $\mathcal{H}$, Lemma \bref{lem:Xn=00003D00003DQn} with
$y^{0}:=x^{0}:=d:=x$ and $y^{s+1}:=\widehat{Q}_{\mathcal{A},\tau^{\mathcal{A}},s,\alpha}(x)=Q_{\mathcal{A},\tau^{\mathcal{A}},s,\alpha}[d](x)$
implies that  

\begin{equation}
\lim_{s\rightarrow\infty}\left(\widehat{Q}_{\mathcal{A},\tau^{\mathcal{A}},s,\alpha}\left(x\right)\right)=P_{A}\left(x\right).\label{eq:5.24}
\end{equation}
 Similarly, given $y$ in $\mathcal{H}$, Lemma \bref{lem:Xn=00003D00003DQn}
with $y^{0}:=x^{0}:=d:=y$ and $y^{r+1}:=\widehat{Q}_{\mathcal{B},\tau^{\mathcal{B}},r,\beta}(y)=Q_{\mathcal{B},\tau^{\mathcal{B}},r,\beta}[d](y)$
implies that 

\begin{equation}
\lim_{r\rightarrow\infty}\left(\widehat{Q}_{\mathcal{B},\tau^{\mathcal{B}},r,\beta}\left(y\right)\right)=P_{B}\left(y\right).\label{eq:5.25}
\end{equation}
Thus, due to the continuity of  $\widehat{Q}_{\mathcal{B},\tau^{\mathcal{B}},r,\beta}$
for all $r\in\mathbb{N}$, which follows from the nonexpansivity property
of the operators (Lemma \bref{lem:M_C=00003D00003DNE}),

\begin{align}
\lim_{r\rightarrow\infty}\left(\lim_{s\rightarrow\infty}\left(\widehat{Q}_{\mathcal{B},\tau^{\mathcal{B}},r,\beta}\left(\widehat{Q}_{\mathcal{A},\tau^{\mathcal{A}},s,\alpha}\left(x\right)\right)\right)\right) & =\lim_{r\rightarrow\infty}\left(\widehat{Q}_{\mathcal{B},\tau^{\mathcal{B}},r,\beta}\left(\lim_{s\rightarrow\infty}\left(\widehat{Q}_{\mathcal{A},\tau^{\mathcal{A}},s,\alpha}\left(x\right)\right)\right)\right)\nonumber \\
 & =\lim_{r\rightarrow\infty}\left(\widehat{Q}_{\mathcal{B},\tau^{\mathcal{B}},r,\beta}\left(P_{A}\left(x\right)\right)\right)=P_{B}\left(P_{A}\left(x\right)\right).
\end{align}
\end{proof}
Next is the classical theorem named after Dini, see, e.g., \cite[Theorem 7.13, page 150]{rudin1976principles}
or \cite[Theorem 8.2.6]{bartle2011introduction}, which tells us
when pointwise convergence of a sequence of functionals (i.e., operators
into the real line) implies its uniform convergence. 
\begin{thm}
\label{thm:Dini=00003D002019s-classical-Theorem}\textbf{ Dini's Theorem}.
Let $K$ be a compact metric space. Let $f:K\rightarrow\mathbb{R}$
be a continuous functional and let $\left\{ f_{k}\right\} _{k=1}^{\infty}$,
$f_{k}:K\rightarrow\mathbb{R}$, be a sequence of continuous functionals.
If $\left\{ f_{k}\right\} _{k=1}^{\infty}$ is monotonic and converges
pointwise to $f$, then $\left\{ f_{k}\right\} _{k=1}^{\infty}$ converges
uniformly to $f$. 
\end{thm}

We will need the following generalization of Dini's Theorem for general
operators (not necessarily functionals), which we derive from Theorem
\bref{thm:Dini=00003D002019s-classical-Theorem}. This proposition
might be known, but we have not seen it in the literature. 
\begin{prop}
\label{prop:generalized-Dini's-Theorem} Let $(K,d_{K})$ be a compact
metric space and let $\left(Y,d_{Y}\right)$ be a metric space. For
each $k\in\mathbb{N}$ suppose that $T_{k}:K\to Y$ is a continuous
operator and assume that there is a continuous operator $T:K\to Y$
such that $\left\{ T_{k}\right\} _{k=1}^{\infty}$ converges pointwise
to $T$. If $d_{Y}(T_{k+1}(x),T(x))\leq d_{Y}(T_{k}(x),T(x))$ for
all $k\in\mathbb{N}$ and all $x\in K$, then $\left\{ T_{k}\right\} _{k=1}^{\infty}$
converges uniformly to $T$. 
\end{prop}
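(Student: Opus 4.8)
The plan is to reduce the general statement to the classical scalar version of Dini's theorem (Theorem \ref{thm:Dini=002019s-classical-Theorem}) by tracking the single real-valued ``error'' function $x\mapsto d_Y(T_k(x),T(x))$. The monotonicity hypothesis is stated precisely in terms of these error functions, so the natural idea is to define, for each $k\in\mathbb{N}$, the functional
\begin{equation}
g_k:K\to\mathbb{R},\qquad g_k(x):=d_Y\bigl(T_k(x),T(x)\bigr),
\end{equation}
and to apply the scalar Dini theorem to the sequence $\{g_k\}_{k=1}^\infty$ together with the zero function $g\equiv 0$. Once I verify that $\{g_k\}$ satisfies the hypotheses of Theorem \ref{thm:Dini=002019s-classical-Theorem}, its conclusion that $g_k\to 0$ uniformly is precisely the assertion that $T_k\to T$ uniformly, since $\sup_{x\in K}g_k(x)=\sup_{x\in K}d_Y(T_k(x),T(x))$.

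First I would check the three hypotheses needed for the scalar theorem. \emph{Continuity of each $g_k$:} each $g_k$ is the composition $x\mapsto (T_k(x),T(x))\mapsto d_Y(T_k(x),T(x))$; since $T_k$ and $T$ are continuous and the metric $d_Y$ is continuous as a map $Y\times Y\to\mathbb{R}$, each $g_k$ is continuous on $K$. \emph{Pointwise convergence:} for each fixed $x\in K$, the pointwise convergence $T_k(x)\to T(x)$ means $d_Y(T_k(x),T(x))\to 0$, i.e. $g_k(x)\to 0=g(x)$, so $\{g_k\}$ converges pointwise to the (continuous) zero functional. \emph{Monotonicity:} the hypothesis $d_Y(T_{k+1}(x),T(x))\le d_Y(T_k(x),T(x))$ for all $k$ and all $x$ says exactly that $g_{k+1}(x)\le g_k(x)$ pointwise, so $\{g_k\}$ is monotonically (nonincreasing) at every point, as Dini's theorem requires.

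With these three facts in hand, Theorem \ref{thm:Dini=002019s-classical-Theorem} applies verbatim and yields that $\{g_k\}$ converges uniformly to $0$: for every $\varepsilon>0$ there is $N_\varepsilon$ such that $g_k(x)<\varepsilon$ for all $k>N_\varepsilon$ and all $x\in K$. Translating back, this is $d_Y(T_k(x),T(x))<\varepsilon$ uniformly in $x$, which is the definition (Definition \ref{def:uniformly convergence}(ii)) of uniform convergence of $\{T_k\}$ to $T$. This completes the argument.

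I do not expect any serious obstacle here, as the proof is essentially a bookkeeping reduction. The only point requiring a modicum of care is the continuity of $g_k$: one must invoke the (standard) joint continuity of the metric $d_Y:Y\times Y\to\mathbb{R}$, which follows from the triangle inequality, rather than merely the continuity of each slot separately. Everything else is a direct transcription of the hypotheses into statements about the scalar sequence $\{g_k\}$, so the ``hard part'' is really just recognizing that the general-operator statement collapses onto the classical scalar one once the correct error functionals are introduced.
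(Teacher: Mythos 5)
Your proposal is correct and follows essentially the same route as the paper's own proof, which likewise defines the error functionals $f_k(x):=d_Y(T_k(x),T(x))$, verifies their continuity, monotonicity, and pointwise convergence to the zero function, and then invokes the classical Dini theorem to obtain uniform convergence. No substantive differences.
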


\begin{proof}
Denote $f_{k}(x):=d_{Y}(T_{k}(x),T(x))$ for all $k\in\mathbb{N}$
and $x\in K$. By the continuity of $T_{k}$ and $T$ and the continuity
of the metric, it follows that $f_{k}:K\to[0,\infty)$ is continuous
for every $k\in\mathbb{N}$. Since $\left\{ T_{k}\right\} _{k=1}^{\infty}$
converges pointwise to $T$, it follows that $0=\lim_{k\rightarrow\infty}d_{Y}(T_{k}(x),T(x))=\lim_{k\rightarrow\infty}f_{k}(x)=\lim_{k\rightarrow\infty}\left|f_{k}\left(x\right)-0\right|$,
namely $\left\{ f_{k}\right\} _{k=1}^{\infty}$ converges pointwise
to $f\equiv0$. Since we assume that $d_{Y}(T_{k+1}(x),T(x))\leq d_{Y}(T_{k}(x),T(x))$
for all $x\in K$ and $k\in\mathbb{N},$ we have $f_{k+1}(x)=d_{Y}(T_{k+1}(x),T(x))\leq d_{Y}(T_{k}(x),T(x))=f_{k}(x)$
for all $x\in K$ and $k\in\mathbb{N}$, namely the sequence $\left\{ f_{k}\right\} _{k=1}^{\infty}$
is monotone. Hence, using the fact that $K$ is compact, we conclude
from Theorem \bref{thm:Dini=00003D002019s-classical-Theorem} that
$\left\{ f_{k}\right\} _{k=1}^{\infty}$ converges uniformly to the
zero function. Thus, given $\varepsilon>0$, there exists some $k_{\varepsilon}\in\mathbb{N}$
such that $d_{Y}(T_{k}(x),T(x))=\left|f_{k}\left(x\right)-0\right|<\varepsilon$
for all $x\in K$ and all $k_{\varepsilon}\leq k\in\mathbb{N}$, namely
$\left\{ T_{k}\right\} _{k=1}^{\infty}$ does converge uniformly to
$T$. 
\end{proof}
\begin{lem}
\label{lem:Fix(M_c)} The fixed point set of the operator \textup{$\widehat{M}_{\mathcal{\mathcal{G}},\tau,w}$}
of (\bref{eq:M_C(d=00003D00003Dx)}) is equal to the, assumed nonempty,
intersection $\bigcap_{\ell=1}^{L}C_{\ell}$. 
\end{lem}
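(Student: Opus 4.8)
The plan is to prove the set equality $\textup{Fix}\,\widehat{M}_{\mathcal{G},\tau}=\bigcap_{\ell=1}^{L}C_{\ell}$ by invoking Proposition \ref{prop:Fix(combi)=00003Dinter(Fix)}, which computes the fixed point set of a convex combination of nonexpansive operators with a common fixed point. The key observation is that $\widehat{M}_{\mathcal{G},\tau}$, as written in \eqref{eq:genericMhat}, is itself a convex combination of the identity operator and the operators $P_{C_1},\ldots,P_{C_L}$, with weights $\tau$ and $\left(1-\tau\right)w_\ell$ respectively.

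First I would verify that the coefficients indeed form a convex combination: the weight on $\textup{Id}$ is $\tau\in(0,1]$, the weight on each $P_{C_\ell}$ is $(1-\tau)w_\ell$, and these sum to $\tau+(1-\tau)\sum_{\ell=1}^{L}w_\ell=\tau+(1-\tau)=1$ since $\sum_{\ell=1}^{L}w_\ell=1$. One technical wrinkle is that Proposition \ref{prop:Fix(combi)=00003Dinter(Fix)} requires strictly positive weights, whereas $\tau=1$ would make all the projection weights vanish; I would either note that the steering parameters live in $(0,1)$ so $\tau<1$ in the relevant applications, or handle $\tau=1$ separately (in which case $\widehat{M}_{\mathcal{G},\tau}=\textup{Id}$ and the statement degenerates). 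Assuming $\tau\in(0,1)$, all weights $(1-\tau)w_\ell$ are strictly positive since each $w_\ell>0$.

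Next I would check the common-fixed-point hypothesis of Proposition \ref{prop:Fix(combi)=00003Dinter(Fix)}. By Proposition \ref{prop:P_C=00003DNE}(1) each $P_{C_\ell}$ is nonexpansive with $\textup{Fix}\,P_{C_\ell}=C_\ell$, and $\textup{Fix}\,\textup{Id}=\mathcal{H}$; their common fixed point set is $\mathcal{H}\cap\bigcap_{\ell=1}^{L}C_\ell=\bigcap_{\ell=1}^{L}C_\ell$, which is nonempty by the standing assumption of the lemma. Hence Proposition \ref{prop:Fix(combi)=00003Dinter(Fix)} applies and yields
\begin{equation}
\textup{Fix}\,\widehat{M}_{\mathcal{G},\tau}=\textup{Fix}\,\textup{Id}\cap\bigcap_{\ell=1}^{L}\textup{Fix}\,P_{C_\ell}=\bigcap_{\ell=1}^{L}C_\ell,
\end{equation}
which is exactly the claim.

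I do not anticipate a genuine obstacle here, since the result is essentially an immediate specialization of Proposition \ref{prop:Fix(combi)=00003Dinter(Fix)}; the only care needed is the bookkeeping that $\textup{Id}$ and the $P_{C_\ell}$ together form a legitimate convex combination with positive weights and a common fixed point. The mildly delicate point is the boundary case $\tau=1$ and ensuring the weights are strictly positive so that the cited proposition (which forbids zero weights) is genuinely applicable; once $\tau\in(0,1)$ is assumed, the argument is a direct citation. Alternatively, should one wish to avoid citing the proposition, a short self-contained argument also works: the inclusion $\bigcap_{\ell}C_\ell\subseteq\textup{Fix}\,\widehat{M}_{\mathcal{G},\tau}$ is immediate because each such point is fixed by every $P_{C_\ell}$, and the reverse inclusion follows from a standard firm-nonexpansiveness or strict-convexity argument showing that a fixed point of the convex combination must be a common fixed point of all the averaged pieces.
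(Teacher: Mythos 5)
Your proof is correct and follows essentially the same route as the paper: both arguments reduce the claim to Proposition \ref{prop:Fix(combi)=00003Dinter(Fix)}, the only cosmetic difference being that you apply it once to the full convex combination $\tau\textup{Id}+\sum_{\ell=1}^{L}(1-\tau)w_{\ell}P_{C_{\ell}}$, whereas the paper applies it twice (first to $S:=\sum_{\ell=1}^{L}w_{\ell}P_{C_{\ell}}$ and then to $\tau\textup{Id}+(1-\tau)S$). Your explicit flagging of the degenerate case $\tau=1$ is a point the paper glosses over (its proof silently restricts to $\tau\in(0,1)$), and is handled appropriately.
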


\begin{proof}
Let us denote $S\left(x\right):=\sum_{\ell=1}^{L}w_{\ell}P_{C_{\ell}}\left(x\right)$.
The identity operator is NE and any $x\in\mathcal{H}$ is a fixed
point of it. By Proposition \bref{prop:P_C=00003D00003DNE} $\textup{Fix}P_{C_{\ell}}=C_{\ell}$,
hence $\bigcap_{\ell=1}^{L}\textup{Fix}P_{C_{\ell}}=\bigcap_{\ell=1}^{L}C_{\ell}\neq\emptyset$.
The projections $P_{C_{\ell}}$ are NEs by Proposition \bref{prop:P_C=00003D00003DNE}.
By Proposition \bref{prop:Fix(combi)=00003D00003Dinter(Fix)} we have
$\textup{Fix}S=\bigcap_{\ell=1}^{L}\textup{Fix}P_{C_{\ell}}$, therefore,
$\textup{Fix}\textup{Id}\bigcap\textup{Fix}S=\bigcap_{\ell=1}^{L}C_{\ell}\neq\emptyset$.
The operator $S$ is a convex combination of the NE operators $P_{C_{\ell}}$,
so, it is also NE. For each $\tau\in(0,1)$ the operator $\widehat{M}_{\mathcal{\mathcal{G}},\tau,w}$
is an averaged NE operator since 
\begin{equation}
\widehat{M}_{\mathcal{\mathcal{G}},\tau,w}=\tau\textup{Id}+\left(1-\tau\right)S.\label{eq:Comb(R+S)}
\end{equation}
Hence, by Proposition \bref{prop:Fix(combi)=00003D00003Dinter(Fix)},
we have 
\begin{equation}
\textup{Fix}\widehat{M}_{\mathcal{\mathcal{G}},\tau,w}=\textup{Fix}\textup{Id}\bigcap\textup{Fix}S=\bigcap_{\ell=1}^{L}C_{\ell}.\label{eq:FixM=00003D00003DINT-C}
\end{equation}
\end{proof}
In the following lemma we prove that the fixed point set of a composition
of operators of the kind defined by (\bref{eq:Comb(R+S)}) is nonempty. 
\begin{lem}
\label{lem:Q_A=00003D00003DsQNE}For each $q\in\mathbb{N},$ we have
\begin{equation}
\textup{Fix}\widehat{Q}_{\mathcal{\mathcal{G}},\tau^{\mathcal{G}},q,w}=\bigcap_{\ell=1}^{L}C_{\ell}.
\end{equation}
\end{lem}

\begin{proof}
Lemma \bref{lem:M_C=00003D00003DNE} implies that the operators $\widehat{M}_{\mathcal{\mathcal{G}},\tau_{t}^{\mathcal{\mathcal{G}}},w}$
are NEs for each $\tau_{t}^{\mathcal{\mathcal{G}}}\in\left(0,1\right)$.
By Lemma \bref{lem:Fix(M_c)} 
\begin{equation}
\textup{Fix}\widehat{M}_{\mathcal{\mathcal{G}},\tau_{t}^{\mathcal{\mathcal{G}}},w}=\bigcap_{\ell=1}^{L}C_{\ell}\neq\emptyset,\quad\textup{for each }\tau_{t}^{\mathcal{\mathcal{G}}}\in\left(0,1\right).\label{eq:FixMnonempty}
\end{equation}
By (\bref{eq:FixM=00003D00003DINT-C}) it is clear that 
\begin{equation}
\bigcap_{t=0}^{q}\mathit{\mathrm{Fix}}\widehat{M}_{\mathcal{\mathcal{G}},\tau_{t}^{\mathcal{\mathcal{G}}},w}=\bigcap_{t=0}^{q}\bigcap_{\ell=1}^{L}C_{\ell}=\bigcap_{\ell=1}^{L}C_{\ell}\neq\emptyset.\label{eq:IntersectFixM}
\end{equation}
We observe that (\bref{eq:Comb(R+S)}) implies that $\widehat{M}_{\mathcal{\mathcal{G}},\tau_{t}^{\mathcal{\mathcal{G}}},w}$
is an averaged NE operator for each $t\in\left\{ 1,2,\ldots,q\right\} $.
Since $\textup{Fix}\widehat{M}_{\mathcal{\mathcal{G}},\tau_{t}^{\mathcal{\mathcal{G}}},w}\neq\emptyset$,
as follows from (\bref{eq:FixMnonempty}), and since $\widehat{Q}_{\mathcal{\mathcal{G}},q,w}$
is a finite composition of the averaged NE operators $\widehat{M}_{\mathcal{\mathcal{G}},\tau_{t}^{\mathcal{\mathcal{G}}},w},$
$t\in\left\{ 1,2,\ldots,q\right\} $, it follows from (\bref{eq:IntersectFixM})
and Proposition \bref{prop:Fix(compo)=00003D00003Dinter(Fix)} that
\begin{equation}
\textup{Fix}\widehat{Q}_{\mathcal{\mathcal{G}},\tau^{\mathcal{G}},q,w}=\textup{Fix}\prod_{t=0}^{q}\widehat{M}_{\mathcal{\mathcal{G}},\tau_{t}^{\mathcal{\mathcal{G}}},w}=\bigcap_{t=0}^{q}\mathit{\mathrm{Fix}}\widehat{M}_{\mathcal{\mathcal{G}},\tau_{t}^{\mathcal{\mathcal{G}}},w}=\bigcap_{\ell=1}^{L}C_{\ell}.
\end{equation}
\end{proof}
\begin{lem}
\label{lem:UC-to-Pc} Under the conditions of Definition \bref{def:generic-Qc}, for each $k\in\mathbb{N}$ let $T_{k}:=\widehat{Q}_{\mathcal{G},\tau^{\mathcal{G}},k-1,w}$. Then $\left\{ T_{k}\right\} _{k=1}^{\infty}$
converges pointwise to $P_{C}$ and the convergence is uniform on
every compact set (and, in particular, on every closed ball). 
\end{lem}

\begin{proof}
Denote $T:=P_{C}$. Fix an arbitrary $x\in\mathcal{H}$ and let $z:=T(x)$.
Then $z=P_{C}(x)\in C=\cap_{\ell=1}^L C_{\ell}$, and so $\widehat{M}_{\mathcal{\mathcal{G}},\tau_{k}^{\mathcal{\mathcal{G}}},w}\left(z\right)=z$
for all $k\in\mathbb{N}$ by Lemma \bref{lem:Fix(M_c)}. Observe that,
by the definition of $T_{k+1}$, we have $T_{k+1}=\widehat{M}_{\mathcal{\mathcal{G}},\tau_{k}^{\mathcal{\mathcal{G}}},w}T_{k}$
whenever $k\geq1.$  Since $\widehat{M}_{\mathcal{\mathcal{G}},\tau_{k}^{\mathcal{\mathcal{G}}},w}$
is NE, according to Lemma \bref{lem:M_C=00003D00003DNE}, it follows
from the previous lines that for all $k\geq1$,  
\begin{align}
 & \left\Vert T_{k+1}(x)-T(x)\right\Vert =\left\Vert T_{k+1}(x)-z\right\Vert \nonumber \\
 & =\left\Vert \widehat{M}_{\mathcal{\mathcal{G}},\tau_{k}^{\mathcal{\mathcal{G}}},w}\left(T_{k}\left(x\right)\right)-\widehat{M}_{\mathcal{\mathcal{G}},\tau_{k}^{\mathcal{\mathcal{G}}},w}\left(z\right)\right\Vert \nonumber \\
 & \leq\left\Vert T_{k}(x)-z\right\Vert =\left\Vert T_{k}(x)-T(x)\right\Vert .\label{eq:5.42}
\end{align}

Now, if we choose $d:=y^{0}\in\mathcal{H}$ in (\bref{eq:Y-sequence}),
then we obtain the iterative process $y^{k+1}:=\widehat{Q}_{\mathcal{\mathcal{G}},k,w}\left(y^{0}\right)=T_{k+1}\left(y^{0}\right)$,
and Lemma \bref{lem:Xn=00003D00003DQn} then guarantees that $\lim_{k\rightarrow\infty}y^{k}=P_{C}\left(y^{0}\right)$
for every $y^{0}\in\mathcal{H}$. Thus, $\left\{ T_{k}\right\} _{k=1}^{\infty}$
converges pointwise to $P_{C}$. As a result of (\bref{eq:5.42}) and
the fact that $T_{k}$ is continuous for all $k$, as a composition
of continuous operators, Proposition \bref{prop:generalized-Dini's-Theorem}
yields the uniform convergence of $\left\{ T_{k}\right\} _{k=1}^{\infty}$
to $T=P_{C}$ on every compact set, and in particular on every closed
ball. 
\end{proof}
\begin{thm}
\label{thm:UC-to-P(B)P(A)} Let $\mathcal{H}$ be a finite-dimensional
real Hilbert space. \textcolor{black}{Let $I$ and $J$ be natural
numbers, $\tau^{\mathcal{A}}=\left\{ \tau_{k}^{\mathcal{A}}\right\} _{k=0}^{\infty}$
and $\tau^{\mathcal{B}}=\left\{ \tau_{k}^{\mathcal{B}}\right\} _{k=0}^{\infty}$ be two
sequences of steering parameters, and $\alpha=(\alpha_{i})_{i=1}^{I}$
and $\beta=(\beta_{j})_{j=1}^{J}$ be two positive weight vectors.} Suppose
that  $R_{k}:\mathcal{H}\rightarrow\mathcal{H}$ and $S_{k}:\mathcal{H}\rightarrow\mathcal{H}$, $k\in\mathbb{N}$ are
defined  for all $k\in\mathbb{N}\cup\left\{ 0\right\} $ by 

\begin{equation}
R_{k+1}:=\widehat{Q}_{\mathcal{A},\tau^{\mathcal{A}},k,\alpha}=\prod_{t=0}^{k}\widehat{M}_{\mathcal{A},\tau_{t}^{\mathcal{A}},\alpha}=\prod_{t=0}^{k}\left(\tau_{t}^{\mathcal{A}}\textup{Id}+\left(1-\tau_{t}^{\mathcal{A}}\right)\sum_{i=1}^{I}\alpha_{i}P_{A_{i}}\right)
\end{equation}
and 

\begin{equation}
S_{k+1}:=\widehat{Q}_{\mathcal{B},\tau^{\mathcal{B}},k,\beta}=\prod_{t=0}^{k}\widehat{M}_{\mathcal{B},\tau_{t}^{\mathcal{B}},\beta}=\prod_{t=0}^{k}\left(\tau_{t}^{\mathcal{B}}\textup{Id}+\left(1-\tau_{t}^{\mathcal{B}}\right)\sum_{j=1}^{J}\beta_{j}P_{B_{j}}\right).
\end{equation}
Further assume that there is some $\rho>0$ such that $A_{i},B_{j}\subseteq B\left[0,\rho\right]$
for all $i\in\left\{ 1,2,\ldots,I\right\} $ and $j\in\left\{ 1,2,\ldots,J\right\} $.
Then the sequence defined for all $k\in\mathbb{N}\cup\left\{ 0\right\} $
by 

\begin{align}
T_{k+1}:= & S_{k+1}R_{k+1}=\widehat{Q}_{\mathcal{B},\tau^{\mathcal{B}},k,\beta}\widehat{Q}_{\mathcal{A},\tau^{\mathcal{A}},k,\alpha}=\left(\prod_{t=0}^{k}\widehat{M}_{\mathcal{B},\tau_{t}^{\mathcal{B}},\beta}\right)\left(\prod_{t=0}^{k}\widehat{M}_{\mathcal{A},\tau_{t}^{\mathcal{A}},\alpha}\right)
\end{align}
converges uniformly to $P_{B}P_{A}$ on $B\left[0,\rho\right]$, and the sequence $\{R_{k+1}S_{k+1}\}_{k=0}^{\infty}$ converges uniformly to $P_AP_B$ on  $B[0,\rho]$. 
\end{thm}

\begin{proof}
By Lemma \bref{lem:UC-to-Pc}, with $\mathcal{G}=\mathcal{A}$ and
$C=A$, $\left\{ R_{k}\right\} _{k=1}^{\infty}$ converges uniformly
to $P_{A}$ on $B\left[0,\rho\right]$, i.e., for every $\varepsilon_{1}>0$,
there exists an integer $N_{1}$ such that  
\begin{equation}
\left\Vert R_{k+1}(x)-P_{A}(x)\right\Vert =\left\Vert \widehat{Q}_{\mathcal{A},\tau^{\mathcal{A}},k,\alpha}(x)-P_{A}(x)\right\Vert <\varepsilon_{1},\label{eq:epsilon-1}
\end{equation}

for all $k>N_{1}$ and all $x\in B\left[0,\rho\right]$.

By Lemma \bref{lem:UC-to-Pc} again, now with $\mathcal{G}=\mathcal{B}$
and $C=B$, $\left\{ S_{k}\right\} _{k=1}^{\infty}$ converges uniformly
to $P_{B}$ on $B\left[0,\rho\right]$, i.e., for every $\varepsilon_{2}>0$,
there exists an integer $N_{2}$ such that

\begin{equation}
\left\Vert S_{k+1}(y)-P_{B}(y)\right\Vert =\left\Vert \widehat{Q}_{\mathcal{B},\tau^{\mathcal{B}},k,\xi}(y)-P_{B}(y)\right\Vert <\varepsilon_{2},\label{eq:epsilon-2}
\end{equation}
for all $k>N_{2}$ and all $y\in B\left[0,\rho\right]$.

Now we prove that $y^{k}:=\widehat{Q}_{\mathcal{A},\tau^{\mathcal{A}},k,\alpha}\left(x\right)$
is in the ball $B\left[0,\rho\right]$ for all $k\in\mathbb{N}\cup\left\{ 0\right\} $
and for all $x\in B\left[0,\rho\right]$. Since $x\in B\left[0,\rho\right]$
and $A_{i}$ and $B_{j}$ are contained in $B\left[0,\rho\right]$
for all  $i\in\{1,2,\ldots,I\}$ and $j\in\{1,2,\ldots,J\}$, by
assumption, we have  $P_{A_{i}}\left(x\right)\in A_{i}\subseteq B\left[0,\rho\right]$
for all  $i\in\left\{ 1,2,\ldots,I\right\} $ and so, by the convexity
of $B\left[0,\rho\right]$ and the facts that $\alpha_{i}\in(0,1)$
for all $i\in\{1,2,\ldots,I\}$ and $\sum_{i=1}^{I}\alpha_{i}=1$, also $\sum_{i=1}^{I}\alpha_{i}P_{A_{i}}\left(x\right)\in B\left[0,\rho\right]$.
Again by the convexity of $B\left[0,\rho\right]$ and the fact that
$\tau_{0}^{\mathcal{\mathcal{A}}}\in\left(0,1\right)$, also  $\widehat{M}_{\mathcal{A},\tau_{0}^{\mathcal{A}},\alpha}(x)=\tau_{0}^{\mathcal{\mathcal{A}}}x+\left(1-\tau_{0}^{\mathcal{\mathcal{A}}}\right)\sum_{i=1}^{I}\alpha_{i}P_{A_{i}}\left(x\right)$ is
in $B\left[0,\rho\right]$, namely  $y^{0}=\widehat{M}_{\mathcal{A},\tau_{0}^{\mathcal{A}},\alpha}(x)\in B\left[0,\rho\right]$.
By induction on $k$, by the equality  $y^{k}=\widehat{Q}_{\mathcal{A},\tau^{\mathcal{A}},k,\alpha}(x)=\widehat{M}_{\mathcal{A},\tau^{\mathcal{A}},\alpha}(\widehat{Q}_{\mathcal{A},\tau^{\mathcal{A}},k-1,\alpha}(x))=\widehat{M}_{\mathcal{A},\tau^{\mathcal{A}},\alpha}(y^{k-1})$ and by arguments similar to the ones used in the case $k=0$, we see that indeed $y^{k}\in B\left[0,\rho\right]$ for every $k\in\mathbb{N}\cup\left\{ 0\right\} $.

Now we combine the results established above, where in the following
calculations $\varepsilon_{1}:=\varepsilon_{2}:=\dfrac{\varepsilon}{2}$
for some arbitrary $\varepsilon>0$, $k\in\mathbb{N}$ is larger than $\max\left\{ N_{1},N_{2}\right\} $,
the triangle inequality implies (\bref{eq:tri-ine}), the fact that
$P_{B}$ is NE by Proposition \bref{prop:P_C=00003D00003DNE} implies
(\bref{eq:P_B-NE}),  inequality (\bref{eq:Uni-Con}) follows from (\bref{eq:epsilon-1}) and (\bref{eq:epsilon-2}),
and  $y:=\widehat{Q}_{\mathcal{A},k,\tau^{\mathcal{A}},\alpha}\left(x\right)$
in (\bref{eq:epsilon-2}) (where $x\in B\left[0,\rho\right]$ is arbitrary
and we also use the fact that $\widehat{Q}_{\mathcal{A},\tau^{\mathcal{A}},k,\alpha}\left(x\right)\in B\left[0,\rho\right]$
for all $x\in B\left[0,\rho\right]$ as we showed earlier): 


\footnotesize

\begin{align}
 & \left\Vert \widehat{Q}_{\mathcal{B},\tau^{\mathcal{B}},k,\beta}\widehat{Q}_{\mathcal{A},\tau^{\mathcal{A}},k,\alpha}\left(x\right)-P_{B}P_{A}\left(x\right)\right\Vert \\
 &  =\left\Vert \widehat{Q}_{\mathcal{B},\tau^{\mathcal{B}},k,\beta}\left(\widehat{Q}_{\mathcal{A},\tau^{\mathcal{A}},k,\alpha}\left(x\right)\right)-P_{B}\left(P_{A}\left(x\right)\right)\right\Vert \\
 &  =\left\Vert \widehat{Q}_{\mathcal{B},\tau^{\mathcal{B}},k,\beta}\left(\widehat{Q}_{\mathcal{A},\tau^{\mathcal{A}},k,\alpha}\left(x\right)\right)-P_{B}\left(\widehat{Q}_{\mathcal{A},\tau^{\mathcal{A}},k,\alpha}\left(x\right)\right)+P_{B}\left(\widehat{Q}_{\mathcal{A},\tau^{\mathcal{A}},k,\alpha}\left(x\right)\right)-P_{B}\left(P_{A}\left(x\right)\right)\right\Vert \\
 & \leq\left\Vert \widehat{Q}_{\mathcal{B},\tau^{\mathcal{B}},k,\beta}\left(\widehat{Q}_{\mathcal{A},\tau^{\mathcal{A}},k,\alpha}\left(x\right)\right)-P_{B}\left(\widehat{Q}_{\mathcal{A},\tau^{\mathcal{A}},k,\alpha}\left(x\right)\right)\right\Vert +\left\Vert P_{B}\left(\widehat{Q}_{\mathcal{A},\tau^{\mathcal{A}},k,\alpha}\left(x\right)\right)-P_{B}\left(P_{A}\left(x\right)\right)\right\Vert \label{eq:tri-ine}\\ &  \leq\left\Vert \widehat{Q}_{\mathcal{B},\tau^{\mathcal{B}},k,\beta}\left(\widehat{Q}_{\mathcal{A},\tau^{\mathcal{A}},k,\alpha}\left(x\right)\right)-P_{B}\left(\widehat{Q}_{\mathcal{A},\tau^{\mathcal{A}},k,\alpha}\left(x\right)\right)\right\Vert +\left\Vert \widehat{Q}_{\mathcal{A},\tau^{\mathcal{A}},k,\alpha}\left(x\right)-P_{A}\left(x\right)\right\Vert \label{eq:P_B-NE}\\
 & \leq \varepsilon_1+\varepsilon_2=\varepsilon.\label{eq:Uni-Con}
\end{align}

\normalsize

Therefore, by Definition \bref{def:uniformly convergence}(ii), the
sequence $\left\{ T_{k+1}\right\} _{k=0}^{\infty}$ converges uniformly
to $P_{B}P_{A}$ on $B[0,\rho]$, i.e., 

\begin{equation}
\lim_{k\rightarrow\infty}T_{k+1}=\lim_{k\rightarrow\infty}\left(\widehat{Q}_{\mathcal{B},\tau^{\mathcal{B}},k,\beta}\widehat{Q}_{\mathcal{A},\tau^{\mathcal{A}},k,\alpha}\right)=P_{B}P_{A}.
\end{equation}
Similarly, the sequence $\{R_{k+1}S_{k+1}\}_{k=0}^{\infty}$ converges uniformly to $P_AP_B$ on $B[0,\rho]$. 
\end{proof}

Since an important component in our algorithm is that the number $q$
of successive projections onto each of the two intersections increases
from one sweep to the next, we prove the following theorem, which
is a cornerstone of our analysis that will follow, for $q\rightarrow\infty$. 
\begin{thm}
\textcolor{black}{\label{thm:adapted cheney and goldstein} Given
are two families of closed }and\textcolor{black}{{} convex sets}\textbf{\textcolor{black}{{}
}}\textcolor{black}{$\left\{ A_{i}\right\} _{i=1}^{I}$ and $\left\{ B_{j}\right\} _{j=1}^{J}$
in }a finite-dimensional real Hilbert space\textcolor{black}{{} $\mathcal{H},$
for some positive integers $I$ and $J.$ A}ssume that $A_{i},B_{j}\subseteq B\left[0,\rho\right]$
for all $i\subseteq\left\{ 1,2,\ldots,I\right\} $ and $j\subseteq\left\{ 1,2,\ldots,J\right\} $
for some $\rho>0$.\textcolor{black}{{} Given are also two sequences
of parameters $\left\{ \tau_{k}^{\mathcal{A}}\right\} _{k=0}^{\infty}$
and $\left\{ \tau_{k}^{\mathcal{B}}\right\} _{k=0}^{\infty}$ and
two positive weight vectors $\alpha=(\alpha_{i})_{i=1}^{I}$ and $\beta=(\beta_{j})_{j=1}^{J}$.
Assume that $A:=\cap_{i=1}^{I}A_{i}\neq\emptyset$ and $B:=\cap_{j=1}^{J}B_{j}\neq\emptyset$
but $A\cap B=\emptyset$. Let $\{\widehat{Q}_{\mathcal{A},\tau^{\mathcal{A}},k,\alpha}\}_{k=0}^{\infty}$
and  $\{\widehat{Q}_{\mathcal{B},\tau^{\mathcal{B}},k,\beta}\}_{k=0}^{\infty}$ be sequences
of operators defined in (\bref{eq:Q_A(x)=00003D00003DQ_A(x,x)}) and
in (\bref{eq:Q_B(x)=00003D00003DQ_B(x,x)}), respectively, and create
the sequences $\{\widehat{Q}_{\mathcal{B},\tau^{\mathcal{B}},k,\beta}\widehat{Q}_{\mathcal{A},\tau^{\mathcal{A}},k,\alpha}\}_{k=0}^{\infty}$
and $\{\widehat{Q}_{\mathcal{A},\tau^{\mathcal{A}},k,\alpha}\widehat{Q}_{\mathcal{B},\tau^{\mathcal{B}},k,\beta}\}_{k=0}^{\infty}$
of their products. } Then the limit operators of the sequences of products
exist, and the fixed point sets are:

 \textcolor{black}{
\begin{equation}
\textup{Fix}\left(\lim_{k\rightarrow\infty}\left(\widehat{Q}_{\mathcal{B},\tau^{\mathcal{B}},k,\beta}\widehat{Q}_{\mathcal{A},\tau^{\mathcal{A}},k,\alpha}\right)\right)=\left\{ z\in B\mid\left\Vert z-P_{A}\left(z\right)\right\Vert =dist\left(A,B\right)\right\} ,\label{eq:distance}
\end{equation}
and 
\begin{equation}
\textup{Fix}\left(\lim_{k\rightarrow\infty}\left(\widehat{Q}_{\mathcal{A},\tau^{\mathcal{A}},k,\alpha}\widehat{Q}_{\mathcal{B},\tau^{\mathcal{B}},k,\beta}\right)\right)=\left\{ z\in A\mid\left\Vert z-P_{B}\left(z\right)\right\Vert =dist\left(A,B\right)\right\} .\label{eq:distance 2}
\end{equation}
}
\end{thm}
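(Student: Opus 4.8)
The plan is to reduce the statement to two results already available: the uniform-convergence theorem (Theorem~\ref{thm:UC-to-P(B)P(A)}) and the Cheney--Goldstein fixed-point characterization (Theorem~\ref{thm:cheney and goldstein Thm 2}).

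First I would identify the two limit operators. Theorem~\ref{thm:UC-to-P(B)P(A)} states precisely that the diagonal product $\widehat{Q}_{\mathcal{B},k}\widehat{Q}_{\mathcal{A},k}$ converges uniformly on $B[0,\rho]$ to $P_{B}P_{A}$; in particular the pointwise limit exists on $B[0,\rho]$ and equals $P_{B}P_{A}$. The hypotheses of that theorem are symmetric in the two families (both $\mathcal{A}$ and $\mathcal{B}$ consist of closed convex sets contained in $B[0,\rho]$, with nonempty intersection, and disjointness of $A$ and $B$ is not used there), so applying it once more with the roles of $\mathcal{A}$ and $\mathcal{B}$ interchanged yields that $\widehat{Q}_{\mathcal{A},k}\widehat{Q}_{\mathcal{B},k}$ converges uniformly on $B[0,\rho]$ to $P_{A}P_{B}$. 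This establishes the existence of both limit operators and reduces the problem to computing $\textup{Fix}(P_{B}P_{A})$ and $\textup{Fix}(P_{A}P_{B})$.

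Next I would apply Theorem~\ref{thm:cheney and goldstein Thm 2}. Writing $P_{B}P_{A}$ in the form $Q=P_{1}P_{2}$ with $P_{1}:=P_{B}$ and $P_{2}:=P_{A}$, i.e.\ with $K_{1}:=B$ and $K_{2}:=A$, the theorem gives that $z\in\textup{Fix}(P_{B}P_{A})$ if and only if $z$ is a point of $B$ nearest to $A$, and that in that case $\|z-P_{A}(z)\|=dist(A,B)$. Since ``a point of $B$ nearest to $A$'' means exactly a point $z\in B$ with $d(z,A)=\|z-P_{A}(z)\|=dist(A,B)$, this is precisely the right-hand side of~\eqref{eq:distance}. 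Reading the same theorem with $K_{1}:=A$ and $K_{2}:=B$ gives~\eqref{eq:distance 2} in the same way.

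The work here is bookkeeping rather than analysis, since the genuine analytic content (the uniform convergence of the composition to a product of projections) is supplied by Theorem~\ref{thm:UC-to-P(B)P(A)}. The one point to watch is the order of composition: the outer projection in $P_{B}P_{A}$ is $P_{B}$, so in the $P_{1}P_{2}$ convention the fixed points lie in $K_{1}=B$, and it is $B$ (not $A$) that indexes the set in~\eqref{eq:distance}. It is also worth remarking that uniform convergence merely on $B[0,\rho]$ is enough, because every fixed point of $P_{B}P_{A}$ already lies in $B\subseteq B[0,\rho]$ (and symmetrically for $P_{A}P_{B}$), so the behaviour of the limit operator outside the ball is irrelevant; moreover, since $A$ and $B$ are closed and bounded in a finite-dimensional space they are compact, hence $dist(A,B)$ is attained and both fixed-point sets are nonempty, although the set equalities themselves follow directly from the ``if and only if'' regardless of attainment.
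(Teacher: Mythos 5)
Your proposal is correct and follows essentially the same route as the paper: the limit operators are identified as $P_{B}P_{A}$ and $P_{A}P_{B}$ via Theorem~\ref{thm:UC-to-P(B)P(A)}, and the fixed-point sets are then read off from the Cheney--Goldstein characterization (Theorem~\ref{thm:cheney and goldstein Thm 2}), with compactness of $A$ and $B$ guaranteeing that $dist(A,B)$ is attained. Your additional remarks on the order of composition and on why uniform convergence on $B[0,\rho]$ suffices are accurate but not needed beyond what the paper itself records.
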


\begin{proof}
Both $A$ and $B$ are nonempty (by assumption). Moreover, both of
them are compact since each of them is an intersection of closed and
bounded (hence compact) subsets. As a result, and as is well-known,
the distance $dist\left(A,B\right)$ between them is attained. Hence
Theorem \bref{thm:cheney and goldstein Thm 2} ensures that 
\begin{equation}
\textup{Fix\ensuremath{\left(P_{B}P_{A}\right)}}=\left\{ z\in B\mid\left\Vert z-P_{A}\left(z\right)\right\Vert =dist\left(A,B\right)\right\} 
\end{equation}
and 
\begin{equation}
\textup{Fix\ensuremath{\left(P_{A}P_{B}\right)}}=\left\{ z\in A\mid\left\Vert z-P_{B}\left(z\right)\right\Vert =dist\left(A,B\right)\right\} .
\end{equation}
Since  $\lim_{k\rightarrow\infty}\left(\widehat{Q}_{\mathcal{B},\tau^{\mathcal{B}},k,\beta}\widehat{Q}_{\mathcal{A},\tau^{\mathcal{A}},k,\alpha}\right)=P_{B}P_{A}$
and $\lim_{k\rightarrow\infty}\left(\widehat{Q}_{\mathcal{A},\tau^{\mathcal{A}},k,\alpha}\widehat{Q}_{\mathcal{B},\tau^{\mathcal{B}},k,\beta}\right)=P_{A}P_{B}$,
as follows from Theorem \bref{thm:UC-to-P(B)P(A)}, the previous lines
imply (\bref{eq:distance}) and (\bref{eq:distance 2}). 
\end{proof}
Our formulation and proof of Lemma \bref{lem:Lemma 2} below are inspired
by \cite[Lemma 2]{aharoni2018finding}, but a few  differences
exist between what is written in \cite{aharoni2018finding} and what
we do here, partly because the proof of \cite[Lemma 2]{aharoni2018finding}
contains a few minor gaps and other issues, and also because it is
possible to slightly generalize \cite[Lemma 2]{aharoni2018finding}
as we do here. 
\begin{lem}
\label{lem:Lemma 2} Let $K_{1}$ and $K_{2}$ be two nonempty, closed
and convex sets in a real Hilbert space $\mathcal{H}$, and let $P_{i}$
be the orthogonal projection onto $K_{i},\:i\in\left\{ 1,2\right\} $.
If $S$ is a nonempty and compact subset of $\mathcal{H}$ such that
$P_{2}P_{1}\left(S\right)=S$ and if $\textup{Fix}\left(P_{2}P_{1}\right)\neq\emptyset$,
then $S\subseteq\textup{Fix}\left(P_{2}P_{1}\right)$  and any $s\in S$
satisfies $d(s,K_{1})=\inf\left\{ d(y,K_{1}):y\in K_{2}\right\}$. Stated differently, if $\emptyset\neq S\subseteq\mathcal{H}$ is compact
such that $P_{2}P_{1}\left(S\right)=S$ and if $dist\left(K_{1},K_{2}\right)$
is attained, then any point of $S$ is both a fixed point of $P_{2}P_{1}$
and a point of $K_{2}$ which is nearest to $K_{1}$.
\end{lem}

\begin{proof}
Define 
\begin{equation}
S^{\prime}:=\left\{ s\in S\mid P_{2}P_{1}\left(s\right)=s\right\} .
\end{equation}

Since $\textup{Fix}\left(P_{2}P_{1}\right)\neq\emptyset$ there is some $s\in\mathcal{H}$ such that $s=P_2P_1(s)$, and because $P_{2}P_{1}\left(S\right)=S$, we have  $s=P_{2}P_{1}(s)\in S$. Hence $S'\neq\emptyset$.

In order to see that $S^{\prime}$ is also compact, we first show that it is
closed. Suppose that $s_{k}^{\prime}\in S^{\prime}$ for each $k\in\mathbb{N}$
and $\lim_{k\rightarrow\infty}s_{k}^{\prime}=s$ for some $s\in\mathcal{H}$.
By the definition of $S^{\prime}$ we have $P_{2}P_{1}\left(s_{k}^{\prime}\right)=s_{k}^{\prime}$
for each $k\in\mathbb{N}$. Because $s_{k}^{\prime}\in S^{\prime}\subseteq S$
for each $k\in\mathbb{N}$ and because $S$ is compact and hence closed,
we conclude that the limit $s$ is also in $S$. Since $P_{2}$ and
$P_{1}$ are continuous, so is their composition, and so $s=\lim_{k\rightarrow\infty}s_{k}^{\prime}=\lim_{k\rightarrow\infty}P_{2}P_{1}\left(s_{k}^{\prime}\right)=P_{2}P_{1}\left(s\right)$,
namely $s$ is a fixed point of $P_{2}P_{1}$. Since we already know
that $s\in S$, we have $s\in S^{\prime}$ by the definition of $S^{\prime}$,
and hence $S^{\prime}$ is closed. Thus $S^{\prime}$ is a closed
subset of the compact set $S$, and hence, as is well-known, this
implies that $S^{\prime}$ is compact.

Let $\hat{d}:=\sup\left\{ d\left(s,S^{\prime}\right)\mid s\in S\right\} $,
where $d\left(s,S^{\prime}\right):=\inf\left\{ \left\Vert s-s^{\prime}\right\Vert \mid s^{\prime}\in S^{\prime}\right\} $.
It is well-known and can easily be proved that the function $f(s):=d\left(s,E\right)$
is continuous (even  NE) whenever $E\neq\emptyset$ (in particular,
for $E:=S^{\prime}$), and so the compactness of $S$, and the Weierstrass
Extreme Value Theorem, imply that $f$ attains its maximum over $S$,
namely $f(y)=\hat{d}=d\left(y,S^{\prime}\right)$ for some $y\in S$.

Since $y\in S=P_{2}P_{1}\left(S\right)=\left\{ P_{2}P_{1}\left(x\right)\mid x\in S\right\} $,
there is some $x\in S$ such that $y=P_{2}P_{1}\left(x\right)$. This
fact, the continuity of the norm, the definition of $d\left(x,S^{\prime}\right)$,
the compactness of $S^{\prime}$ and the Weierstrass Extreme Value
Theorem, all imply that the infimum $\inf\left\{ \left\Vert x-s^{\prime}\right\Vert \mid s^{\prime}\in S^{\prime}\right\}$ is attained, namely  $d\left(x,S^{\prime}\right)=\left\Vert x-s^{\prime}\right\Vert $
for some $s^{\prime}\in S^{\prime}$. Therefore, 
\begin{align}
\left\Vert x-s^{\prime}\right\Vert =d\left(x,S^{\prime}\right)\leq\sup\left\{ d\left(s,S^{\prime}\right)\mid s\in S\right\} =\hat{d}\nonumber \\
=d\left(y,S^{\prime}\right)=\inf\left\{ \left\Vert y-s^{\prime\prime}\right\Vert \mid s^{\prime\prime}\in S^{\prime}\right\}  & \leq\left\Vert y-s^{\prime}\right\Vert .\label{eq:5.62}
\end{align}

We claim that $x\in S^{\prime}$. Indeed, suppose by way of contradiction
that this is not true. Since $s^{\prime}\in S^{\prime}\subseteq S=P_2P_1(S)\subseteq K_{2}$
and since any point in $S^{\prime}$ is a fixed point of $P_{2}P_{1}$,
we know from Theorem \bref{thm:cheney and goldstein Thm 2} that $s^{\prime}$
is a point in $K_{2}$ which is nearest to $K_{1}$. Thus $\left\Vert s^{\prime}-P_{1}\left(s^{\prime}\right)\right\Vert =dist\left(K_{2},K_{1}\right)$.
On the other hand, since $x\in S\subseteq K_{2}$ and since we assume
that $x\notin S^{\prime}$, it follows that $x$ is not a fixed point
of $P_{2}P_{1}$ (otherwise it would be in $S^{\prime}$ by the definition
of $S^{\prime}$, a contradiction). Therefore,  Theorem \bref{thm:cheney and goldstein Thm 2} implies
that $x$ cannot be a point in $K_{2}$ which is nearest to $K_{1}$,
and so $dist\left(K_{2},K_{1}\right)<\left\Vert x-P_{1}\left(x\right)\right\Vert $.
We conclude from the previous lines that $\left\Vert s^{\prime}-P_{1}\left(s^{\prime}\right)\right\Vert =dist\left(K_{2},K_{1}\right)<\left\Vert x-P_{1}\left(x\right)\right\Vert $.

Consequently, from the necessary condition for equality in the nonexpansivness
property of an orthogonal projection \cite[Theorem 3]{cheney1959proximity}
(see the second part of Proposition \bref{prop:P_C=00003D00003DNE}),
we conclude that $\left\Vert P_{1}\left(x\right)-P_{1}\left(s^{\prime}\right)\right\Vert <\left\Vert x-s^{\prime}\right\Vert $.
This inequality, and the facts that $P_{2}$ is NE , that $s^{\prime}=P_{2}P_{1}\left(s^{\prime}\right)$
and that $y=P_{2}P_{1}\left(x\right)$, all imply that 
\begin{equation}
\left\Vert y-s^{\prime}\right\Vert =\left\Vert P_{2}P_{1}\left(x\right)-P_{2}P_{1}\left(s^{\prime}\right)\right\Vert \leq\left\Vert P_{1}\left(x\right)-P_{1}\left(s^{\prime}\right)\right\Vert <\left\Vert x-s^{\prime}\right\Vert ,\label{eq:5.63}
\end{equation}
and this contradicts (\bref{eq:5.62}). Thus, the initial assumption
that $x\notin S^{\prime}$ cannot be true, namely $x\in S^{\prime}$,
and hence $x$ is a fixed point of $P_{2}P_{1}$. Therefore, $x=P_{2}P_{1}\left(x\right)=y$ and hence, from the fact that $x\in S^{\prime}$, we conclude that 
$y\in S^{\prime}$. Thus, $d\left(y,S^{\prime}\right)=0$ and so $\hat{d}=0$.
Since $0\leq d\left(s,S^{\prime}\right)\leq\hat{d}=0$ for all $s\in S$
by the definition of $\hat{d}$, it follows that each $s\in S$ satisfies
$d\left(s,S^{\prime}\right)=0$. But $S^{\prime}$ is closed, as we
showed earlier, and hence every $s\in S$ is actually in $S^{\prime}$.
Therefore, $S\subseteq S^{\prime}$, and since obviously $S^{\prime}\subseteq S$,
we have $S=S^{\prime}$, that is, every point in $S$ is a fixed point
of $P_{2}P_{1}$, as claimed. Finally, from   Theorem \bref{thm:cheney and goldstein Thm 2}
we know that $\textup{Fix}\left(P_{2}P_{1}\right)\neq\emptyset$ if
and only if $dist\left(K_{1},K_{2}\right)$ is attained, and, moreover,
that $s$ is a fixed point of $P_{2}P_{1}$ if and only if $s$ is a
point of $K_{2}$ which is nearest to $K_{1}.$

\end{proof}
\begin{rem}
An alternative proof of Lemma \bref{lem:Lemma 2} was suggested by
one of the referees. This proof is based on the fact that $P_{2}P_{1}$
is averaged (since both $P_{1}$ and $P_{2}$ are firmly nonexpansive
\cite[Theorem 2.2.21 (iii)]{Ceg-book} and hence, as follows from
\cite[Lemma 2.3]{Byrne2004jour}, both $P_{1}$ and $P_{2}$ are
averaged and therefore, as follows from either \cite[Proposition 2.1]{Byrne2004jour}
or \cite[Proposition 4.44]{BC17}, the composition $P_{2}P_{1}$
is averaged) and hence, using the fact that $\textup{Fix}\left(P_{2}P_{1}\right)$
is nonempty, $P_{2}P_{1}$ is actually strongly quasi-nonexpansive
(since $P_{2}P_{1}$ is a relaxed firmly nonexpansive operator by
\cite[Corollary 2.2.17)]{Ceg-book}, and therefore strongly quasi-nonexpansive
as follows from \cite[Corollary 2.2.15]{Ceg-book}). 
\end{rem}

Now we are able to present the A-S-HLWB algorithm:\\ 
\begin{algorithm}[H]
\noindent \textbf{Input: } Two positive integers $I$ and $J,$ two
families\textbf{ }$\left\{ A_{i}\right\} _{i=1}^{I}$ and $\left\{ B_{j}\right\} _{j=1}^{J}$
of closed and convex subsets in a Euclidean space \textit{$\mathcal{H}$,}
two sequences $\tau^{\mathcal{A}}=\left\{ \tau_{t}^{\mathcal{A}}\right\} _{t=0}^{\infty}$
and $\tau^{\mathcal{B}}=\left\{ \tau_{t}^{\mathcal{B}}\right\} _{t=0}^{\infty}$ of steering
parameters, two positive weight vectors \textcolor{black}{$\alpha=(\alpha_{i})_{i=1}^{I}$
and $\beta=(\beta_{j})_{j=1}^{J}$}.

\setlength{\parindent}{0pt}
\noindent\textbf{Initialization:} Choose an arbitrary starting point
\textit{$x^{0}\in\mathcal{H}$}.

\noindent \textbf{Iterative Step}: Given $x^{k}$, $k\in\mathbb{N}\cup\{0\}$, find the next iterate
$x^{k+1}$, as follows:

\noindent (a) If $k$ is even, then define

\noindent 
\begin{equation}\label{eq:xk_even}
x^{k+1}:=\widehat{Q}_{\mathcal{A},\tau^{\mathcal{A}},\frac{k}{2},\alpha}(x^{k}).
\end{equation}

\noindent (b) If $k$ is odd, then define 

\noindent 

\noindent 
\begin{equation}\label{eq:xk_odd}
x^{k+1}:=\widehat{Q}_{\mathcal{B},\tau^{\mathcal{B}},\frac{k-1}{2},\beta}(x^{k}).
\end{equation}

\caption{The new alternating simultaneous HLWB (A-S-HLWB) algorithm}
\label{alg:our-alg} 
\end{algorithm}

As an illustration, given an initialization point $x^{0}$, we write below explicitly the first six iterations out of the infinite sequence, generated by the A-S-HLWB algorithm (observe that the number of sweeps in both the even and the odd iterations increases as the iterations increase):

\small 
\begin{equation}
 x^{1}=\widehat{Q}_{\mathcal{A},\tau^{\mathcal{A}},0,\alpha}\left(x^{0}\right)=\left(\tau_{0}^{\mathcal{A}}\textup{Id}+\left(1-\tau_{0}^{\mathcal{A}}\right)\sum_{i=1}^{I}\alpha_{i}P_{A_{i}}\right)\left(x^{0}\right).
\end{equation}

\begin{equation}
 x^{2}=\widehat{Q}_{\mathcal{B},\tau^{\mathcal{B}},0,\beta}\left(x^{1}\right)=\left(\tau_{0}^{\mathcal{B}}\textup{Id}+\left(1-\tau_{0}^{\mathcal{B}}\right)\sum_{j=1}^{J}\beta_{j}P_{B_{j}}\right)\left(x^{1}\right).
\end{equation}

\begin{equation}
 x^{3}=\widehat{Q}_{\mathcal{A},\tau^{\mathcal{A}},1,\alpha}\left(x^{2}\right)=\left(\tau_{1}^{\mathcal{A}}\textup{Id}+\left(1-\tau_{1}^{\mathcal{A}}\right)\sum_{i=1}^{I}\alpha_{i}P_{A_{i}}\right)\left(\tau_{0}^{\mathcal{A}}\textup{Id}+\left(1-\tau_{0}^{\mathcal{A}}\right)\sum_{i=1}^{I}\alpha_{i}P_{A_{i}}\right)\left(x^{2}\right).
\end{equation}

\begin{equation}
 x^{4}=\widehat{Q}_{\mathcal{B},\tau^{\mathcal{B}},1,\beta}\left(x^{3}\right)=\left(\tau_{1}^{\mathcal{B}}\textup{Id}+\left(1-\tau_{1}^{\mathcal{B}}\right)\sum_{\ell=1}^{J}\beta_{j}P_{B_{j}}\right)\left(\tau_{0}^{\mathcal{B}}\textup{Id}+\left(1-\tau_{0}^{\mathcal{B}}\right)\sum_{\ell=1}^{J}\beta_{j}P_{B_{j}}\right)\left(x^{3}\right).
\end{equation}

\begin{equation}
 x^{5}=\widehat{Q}_{\mathcal{A},\tau^{\mathcal{A}},2,\alpha}\left(x^{4}\right)=\prod_{t=0}^{2}\left(\tau_{t}^{\mathcal{A}}\textup{Id}+\left(1-\tau_{t}^{\mathcal{A}}\right)\sum_{i=1}^{I}\alpha_{i}P_{A_{i}}\right)\left(x^{4}\right).
\end{equation}

\begin{equation}
 x^{6}=\widehat{Q}_{\mathcal{B},\tau^{\mathcal{B}},2,\beta}\left(x^{5}\right)=\prod_{t=0}^{2}\left(\tau_{t}^{\mathcal{B}}\textup{Id}+\left(1-\tau_{t}^{\mathcal{B}}\right)\sum_{j=1}^{J}\beta_{j}P_{B_{j}}\right)\left(x^{5}\right).
\end{equation}
\normalsize

Hence, in the computation of $x^{1}$ one applies a projection operator
(from the pool $\left\{ P_{A_{1}},\ldots,P_{A_{I}}\right\} $) $I$
times on $x^{0}$. In the computation of $x^{2}$ one first computes
$x^{1}$ by applying a projection operator $I$ times and then applies
a projector operator (from the pool $\left\{ P_{B_{1}},\ldots,P_{B_{J}}\right\} $)
$J$ times on $x^{1}$, and so in total one applies a projector operator
$I+J$ times. In the computation of $x^{3}$ one applies a projection
operator (from the pool $\left\{ P_{A_{1}},\ldots,P_{A_{I}}\right\} $)
$I$ times on $x^{2}$ and then another $I$ times (again from the
pool $\left\{ P_{A_{1}},\ldots,P_{A_{I}}\right\} $) on $\left(\tau_{0}^{\mathcal{A}}\textup{Id}+\left(1-\tau_{0}^{\mathcal{A}}\right)\sum_{i=1}^{I}\alpha_{i}P_{A_{i}}\right)\left(x^{2}\right)$, and since for computing $x^2$ one applies a projection operator $I+J$ times as we showed earlier, in total one applies a projection operator (from the pools
$\left\{ P_{A_{1}},\ldots,P_{A_{I}}\right\} $ and $\left\{ P_{B_{1}},\ldots,P_{B_{J}}\right\}$)  $I+J+2I$ times during the computation of $x^3$.  Similarly, in the computation of $x^{4}$ one applies
a projection operator (from the pools $\left\{ P_{A_{1}},\ldots,P_{A_{I}}\right\} $
and $\left\{ P_{B_{1}},\ldots,P_{B_{J}}\right\}$)  $I+J+2I+2J$ times.
In general, in the computation of $x^{2r}$ for some natural number
$r$ one applies a projection operator $I+J+2I+2J+\ldots+rI+rJ=(I+J)(1+2+\ldots+r)=(I+J)r(r+1)/2$
times.

\section{Convergence of the A-S-HLWB algorithm\label{sec:Convergence}}

In this section we present our main convergence result, namely Theorem
\bref{thm:Main} below. Its proof is inspired by the first part of
the proof of \cite[Theorem 1]{aharoni2018finding}. Before formulating
the theorem, we need a lemma. 
\begin{lem}
\label{lem:xk_in_Ball} If there is some $\rho>0$ such $A_{i},B_{j}\subseteq B\left[0,\rho\right]$
for all $i\in\left\{ 1,2,\ldots,I\right\} $ and $j\in\left\{ 1,2,\ldots,J\right\} $,
 if $x^{0}\in B[0,\rho]$ and if $\left\{ x^{k}\right\} _{k=0}^{\infty}$
is generated using Algorithm \bref{alg:our-alg}, then $x^{k}\in B[0,\rho]$ for all
$k\in\mathbb{N\cup\left\{ \mathrm{0}\right\} }$. 
\end{lem}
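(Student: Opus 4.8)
The plan is to establish that the closed ball $B[0,\rho]$ is invariant under each of the building-block operators $\widehat{M}_{\mathcal{G},\tau}$ of \eqref{eq:M_C(d=00003Dx)}, and then to propagate this invariance through the compositions $\widehat{Q}_{\mathcal{A},r}$ and $\widehat{Q}_{\mathcal{B},r}$ and finally through the two cases of the iterative step, \eqref{eq:xk_even} and \eqref{eq:xk_odd}, by induction on $k$. This is essentially the same convexity argument already carried out inside the proof of Theorem \ref{thm:UC-to-P(B)P(A)} for the auxiliary sequence $y^{k}=\widehat{Q}_{\mathcal{A},k}(x)$, now applied uniformly to both families and to the full alternating scheme.

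First I would fix $\mathcal{G}\in\{\mathcal{A},\mathcal{B}\}$ with generating sets $C_{1},\ldots,C_{L}$ (so that each $C_{\ell}$ is one of the $A_{i}$ or one of the $B_{j}$) and $\tau\in(0,1)$, and show that $\widehat{M}_{\mathcal{G},\tau}$ maps $B[0,\rho]$ into itself. Let $x\in B[0,\rho]$. By hypothesis $C_{\ell}\subseteq B[0,\rho]$, hence $P_{C_{\ell}}(x)\in C_{\ell}\subseteq B[0,\rho]$ for every $\ell$; since $B[0,\rho]$ is convex and the weights satisfy $w_{\ell}\geq 0$, $\sum_{\ell=1}^{L}w_{\ell}=1$, the weighted sum $\sum_{\ell=1}^{L}w_{\ell}P_{C_{\ell}}(x)$ again lies in $B[0,\rho]$. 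Finally $\widehat{M}_{\mathcal{G},\tau}(x)=\tau x+(1-\tau)\sum_{\ell=1}^{L}w_{\ell}P_{C_{\ell}}(x)$ is a convex combination of the two ball-points $x$ and $\sum_{\ell=1}^{L}w_{\ell}P_{C_{\ell}}(x)$, and so it belongs to $B[0,\rho]$ by convexity once more.

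Next, since by \eqref{eq:Q_C(x)=00003DQ_C(x,x)} each $\widehat{Q}_{\mathcal{G},r}$ is a finite composition of operators of the form $\widehat{M}_{\mathcal{G},\tau_{t}^{\mathcal{G}}}$ with each steering parameter in $(0,1)$, the invariance of the individual factors immediately yields $\widehat{Q}_{\mathcal{A},r}(B[0,\rho])\subseteq B[0,\rho]$ and $\widehat{Q}_{\mathcal{B},r}(B[0,\rho])\subseteq B[0,\rho]$ for every $r$. I would then conclude by induction on $k$: the base case $x^{0}\in B[0,\rho]$ is the hypothesis, and if $x^{k}\in B[0,\rho]$, then $x^{k+1}$ is obtained from $x^{k}$ by applying either $\widehat{Q}_{\mathcal{A},r}$ when $k$ is even (via \eqref{eq:xk_even}) or $\widehat{Q}_{\mathcal{B},r}$ when $k$ is odd (via \eqref{eq:xk_odd}), both of which preserve $B[0,\rho]$; hence $x^{k+1}\in B[0,\rho]$.

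There is no genuine obstacle here: the statement follows from repeated use of the convexity of the ball together with the fact that each projection $P_{C_{\ell}}$ lands inside a generating set contained in the ball. The only point requiring a little care is the bookkeeping, namely to observe that the argument is indifferent both to whether the active family is $\mathcal{A}$ or $\mathcal{B}$ and to the changing number of factors and steering parameters as the sweeps grow, so that a single invariance statement for $\widehat{M}_{\mathcal{G},\tau}$ suffices for the entire alternating, increasing-sweep scheme.
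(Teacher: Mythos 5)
Your proposal is correct and follows essentially the same route as the paper's proof: establish that each $\widehat{M}_{\mathcal{G},\tau}$ maps $B[0,\rho]$ into itself via the convexity of the ball and the fact that each $P_{C_{\ell}}$ lands in $C_{\ell}\subseteq B[0,\rho]$, propagate this through the finite compositions $\widehat{Q}_{\mathcal{A},r}$ and $\widehat{Q}_{\mathcal{B},r}$, and finish by induction on $k$ over the even/odd cases of the iterative step. The only cosmetic difference is that you isolate the single-operator invariance as a standalone step applicable to both families at once, whereas the paper runs the argument inline for $\mathcal{A}$ and then says ``similarly'' for $\mathcal{B}$.
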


\begin{proof}
In order to prove this assertion, we first show by induction that
if $x\in B[0,\rho],$ then  $\widehat{Q}_{\mathcal{A},\tau^{\mathcal{A}},k,\alpha}(x)\in B[0,\rho]$
for every $k\in\mathbb{N\cup\left\{ \mathrm{0}\right\} }$. Indeed,
recall that  $\widehat{Q}_{\mathcal{A},\tau^{\mathcal{A}},k,\alpha}(x)=\prod_{t=0}^{k}\widehat{M}_{\mathcal{A},\tau_{t}^{\mathcal{A}},\alpha}(x).$
Since $x\in B[0,\rho]$ and $A_{i}$ is contained in $B\left[0,\rho\right]$
for all $i\in I$, we have  $P_{A_{i}}\left(x\right)\in A_{i}\subseteq B\left[0,\rho\right]$
for all $i\in\left\{ 1,2,\ldots,I\right\} $ and so, by the convexity
of $B\left[0,\rho\right]$, also $\sum_{i=1}^{I}\alpha_{i}P_{A_{i}}\left(x\right)\in B\left[0,\rho\right]$.
Again by the convexity of $B\left[0,\rho\right]$ and the fact that
$\tau_{0}^{\mathcal{A}}\in(0,1)$, also  $\tau_{0}^{\mathcal{\mathcal{A}}}x+\left(1-\tau_{0}^{\mathcal{\mathcal{A}}}\right)\sum_{i=1}^{I}\alpha_{i}P_{A_{i}}\left(x\right)$
is in $B\left[0,\rho\right]$, namely $\widehat{Q}_{\mathcal{A},\tau^{\mathcal{A}},0,\alpha}(x)=\widehat{M}_{\mathcal{A},\tau_{0}^{\mathcal{A}},\alpha}(x)\in B\left[0,\rho\right]$.
Assume now that $k\in\mathbb{N}$ and that the induction hypothesis
holds for all $q\in\left\{ 0,1,\ldots,k-1\right\} $, that is, $\widehat{Q}_{\mathcal{A},\tau^{\mathcal{A}},q,\alpha}(x)\in B[0,\rho]$
for all $q\in\left\{ 0,1,\ldots,k-1\right\} $. Since $\widehat{M}_{\mathcal{A},\tau_{k}^{\mathcal{\mathcal{A}}},\alpha}(y)\in B[0,\rho]$
whenever $y\in B[0,\rho]$ by a similar argument to the one used in
previous lines (we use the same argument which we used in order to
show that  $\widehat{M}_{\mathcal{A},\tau_{0}^{\mathcal{A}},\alpha}(x)\in B\left[0,\rho\right]$
whenever $x\in B[0,\rho]$, but now with  $\widehat{M}_{\mathcal{A},\tau_{k}^{\mathcal{\mathcal{A}}},\alpha}(y)$
and $y$ instead of  $\widehat{M}_{\mathcal{A},\tau_{0}^{\mathcal{A}},\alpha}(x)$
and $x$, respectively), we see that for $y:=\widehat{Q}_{\mathcal{A},\tau^{\mathcal{A}},k-1,\alpha}(x)$,
we have $\widehat{Q}_{\mathcal{A},\tau^{\mathcal{A}},k,\alpha}(x)=\widehat{M}_{\mathcal{A},\tau_{k}^{\mathcal{\mathcal{A}}},\alpha}(\widehat{Q}_{\mathcal{A},\tau^{\mathcal{A}},k-1,\alpha}(x))=\widehat{M}_{\mathcal{A},\tau_{k}^{\mathcal{\mathcal{A}}},\alpha}(y)\in B[0,\rho]$,
since by the induction hypothesis for $k-1$ we have $y\in B[0,\rho]$.
Hence the induction hypothesis holds for $k$ as well, and so  $\widehat{Q}_{\mathcal{A},\tau^{\mathcal{A}},k,\alpha}(x)\in B[0,\rho]$
for every $k\in\mathbb{N\cup\left\{ \mathrm{0}\right\} }$, as required.
Similarly, if $x\in B[0,\rho]$, then $\widehat{Q}_{\mathcal{B},\tau^{\mathcal{B}},k,\beta}(x)\in B[0,\rho]$
for every $k\in\mathbb{N\cup\left\{ \mathrm{0}\right\} }$.

Finally, in order to see that $x^{k}\in B[0,\rho]$ for all $k\in\mathbb{N\cup\left\{ \mathrm{0}\right\} }$,
we apply induction on $k$. By our assumption $x^{0}\in B[0,\rho].$
Assume that the induction hypothesis holds for all $q\in\left\{ 0,1,\ldots,k\right\} ,$
namely, that $x^{q}\in B[0,\rho]$ for all $q\in\left\{ 0,1,\ldots,k\right\} .$
If $k$ is even, then, according to (\bref{eq:xk_even}), one has $x^{k+1}=\widehat{Q}_{\mathcal{A},\tau^{\mathcal{A}},k/2,\alpha}(x^{k})$,
and so by previous lines and the induction hypothesis (that $x:=x^{k}\in B[0,\rho]$)
it follows that  $x^{k+1}=\widehat{Q}_{\mathcal{A},\tau^{\mathcal{A}},k/2,\alpha}(x)\in B[0,\rho],$
as well. If $k$ is odd, then, according to (\bref{eq:xk_odd}), one
has $x^{k+1}=\widehat{Q}_{\mathcal{B},\tau^{\mathcal{B}},(k-1)/2,\beta}(x^{k})$, and
so by previous lines and the induction hypothesis (with $x:=x^{k}\in B[0,\rho]$ and $q:=k/2$)
it follows that  $x^{k+1}=\widehat{Q}_{\mathcal{B},\tau^{\mathcal{B}},(k-1)/2,\beta}(x)\in B[0,\rho],$
as well. Hence the induction hypothesis holds for $k+1.$ Therefore
indeed $x^{k}\in B[0,\rho]$ for every nonnegative integer $k$, as
required. 
\end{proof}

\begin{thm}
\label{thm:Main}Given are two families of closed and  convex sets $\left\{ A_{i}\right\} _{i=1}^{I}$
and $\left\{ B_{j}\right\} _{j=1}^{J}$ in a Euclidean space $\mathcal{H}$
for some positive integers $I$ and $J,$ two sequences of steering
parameters $\tau^{\mathcal{A}}=\left\{ \tau_{k}^{\mathcal{A}}\right\} _{k=1}^{\infty}$
and $\tau^{\mathcal{B}}=\left\{ \tau_{k}^{\mathcal{B}}\right\} _{k=1}^{\infty}$, and
two positive weight vectors \textcolor{black}{$\alpha=(\alpha_{i})_{i=1}^{I}$
and $\beta=(\beta_{j})_{j=1}^{J}$}. Assume that $A:=\cap_{i=1}^{I}A_{i}\neq\emptyset$
and $B:=\cap_{j=1}^{J}B_{j}\neq\emptyset$ but $A\cap B=\emptyset$.
Assume also that there is some $\rho>0$ such that $A_{i},B_{j}\subseteq B\left[0,\rho\right]$
for all $i\in\left\{ 1,2,\ldots,I\right\} $ and $j\in\left\{ 1,2,\ldots,J\right\} $. 
Further assume that there is a unique best approximation pair relative
to $\left(A,B\right)$. Let \textup{$\left\{ x^{k}\right\} _{k=0}^{\infty}$}
be a sequence generated by Algorithm \bref{alg:our-alg}, where we
assume that $x^{0}\in B[0,\rho]$. Then 
\begin{equation}
\lim_{k\rightarrow\infty}\left\Vert x^{k+1}-x^{k}\right\Vert =dist\left(A,B\right),
\end{equation}
and moreover, the odd subsequence $\left\{ x^{2k+1}\right\} _{k=0}^{\infty}$
converges to a point $a\in A$, the even subsequence $\left\{ x^{2k}\right\} _{k=0}^{\infty}$
converges to a point $b\in B$, and $\left(a,b\right)$ is a best
approximation pair relative to $\left(A,B\right)$. In particular,
the conclusion of the theorem holds when all the sets $A_{i}$ and
$B_{j}$, $i\in\left\{ 1,2,\ldots,I\right\} $ and $j\in\left\{ 1,2,\ldots,J\right\} $,
are strictly convex. 
\end{thm}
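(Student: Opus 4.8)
The plan is to study the even-indexed and odd-indexed subsequences separately, to show that each converges to one component of the (assumed unique) best approximation pair, and then to read off the displacement limit. First I would apply Lemma \ref{lem:xk_in_Ball} to confine the whole orbit $\{x^{k}\}$ to the compact ball $B[0,\rho]$; since each $A_{i}$ and $B_{j}$ is closed and contained in $B[0,\rho]$, the intersections $A$ and $B$ are compact, and being disjoint and compact they satisfy $dist(A,B)>0$ with the distance attained.

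The key structural remark is that the even subsequence obeys the non-autonomous recursion
\[
x^{2(k+1)}=\widehat{Q}_{\mathcal{B},k}\widehat{Q}_{\mathcal{A},k}(x^{2k})=T_{k+1}(x^{2k}),
\]
where, by Theorem \ref{thm:UC-to-P(B)P(A)}, the operators $T_{k+1}$ converge uniformly on $B[0,\rho]$ to $P_{B}P_{A}$. Let $S$ be the set of cluster points of $\{x^{2k}\}$; as a bounded sequence in a finite-dimensional space, its cluster-point set is nonempty and compact. I would then establish the self-map identity $P_{B}P_{A}(S)=S$. For $P_{B}P_{A}(S)\subseteq S$, take $s\in S$ and a subsequence $x^{2k_{j}}\to s$; writing $x^{2(k_{j}+1)}=T_{k_{j}+1}(x^{2k_{j}})$ and combining the uniform convergence $T_{k_{j}+1}\to P_{B}P_{A}$ with the continuity of $P_{B}P_{A}$ gives $x^{2(k_{j}+1)}\to P_{B}P_{A}(s)$, so $P_{B}P_{A}(s)\in S$. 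For the reverse inclusion, extract a convergent subsequence of the predecessors $x^{2(k_{j}-1)}$ and apply the same limiting argument to realize each $s\in S$ as $P_{B}P_{A}(s')$ with $s'\in S$.

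With $S$ compact and satisfying $P_{B}P_{A}(S)=S$, and with the Cheney--Goldstein iteration $\{(P_{B}P_{A})^{k}(z)\}$ converging to a fixed point of $P_{B}P_{A}$ for every starting point (Theorem \ref{thm:Cheney and Goldstein Thm 4}, applicable because $A$ and $B$ are compact), Lemma \ref{lem:Lemma 2} with $K_{1}=A$ and $K_{2}=B$ yields $S\subseteq\textup{Fix}(P_{B}P_{A})$ and moreover that every point of $S$ is a point of $B$ nearest to $A$. By the uniqueness hypothesis on the best approximation pair, such a point is unique, so $S=\{b\}$ for some $b\in B$; a bounded sequence in a finite-dimensional space with a single cluster point converges, and hence $x^{2k}\to b$. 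Running the identical argument on the odd subsequence $x^{2k+1}=\widehat{Q}_{\mathcal{A},k}(x^{2k})$, which satisfies $x^{2(k+1)+1}=\widehat{Q}_{\mathcal{A},k+1}\widehat{Q}_{\mathcal{B},k}(x^{2k+1})$ with the products converging uniformly to $P_{A}P_{B}$, produces $x^{2k+1}\to a\in A$, the unique point of $A$ nearest to $B$. By Theorem \ref{thm:cheney and goldstein Thm 2} the nearest points $a$ and $b$ are exactly the two components of a best approximation pair, so uniqueness forces $(a,b)$ to be the best approximation pair. The displacement limit is then immediate from continuity of the norm, since $\|x^{2k+1}-x^{2k}\|\to\|a-b\|=dist(A,B)$ and $\|x^{2k+2}-x^{2k+1}\|\to\|b-a\|=dist(A,B)$, giving $\|x^{k+1}-x^{k}\|\to dist(A,B)$ regardless of parity; finally, the strictly convex special case follows by combining Lemma \ref{lem:Lemma 13} (the intersections $A$ and $B$ remain strictly convex) with Proposition \ref{prop:Proposition 6}(iii), which supplies the uniqueness hypothesis.

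The main obstacle I anticipate is the verification of the self-map identity $P_{B}P_{A}(S)=S$ and, more broadly, the disciplined handling of the non-autonomous iteration: one must use the uniform rather than merely pointwise convergence of $T_{k+1}$ on the compact ball so that limits may be interchanged, and one must track the shifted indices arising in the odd subsequence, where the composition $\widehat{Q}_{\mathcal{A},k+1}\widehat{Q}_{\mathcal{B},k}$ appears with mismatched subscripts yet still converges uniformly to $P_{A}P_{B}$.
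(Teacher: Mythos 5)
Your proposal follows essentially the same route as the paper's proof: confine the orbit to $B[0,\rho]$ via Lemma \ref{lem:xk_in_Ball}, use the uniform convergence of the composed operators to $P_{B}P_{A}$ (Theorem \ref{thm:UC-to-P(B)P(A)}) to prove the self-map identity $P_{B}P_{A}(S)=S$ for the cluster-point set of the even subsequence, invoke Lemma \ref{lem:Lemma 2}, and finish with the uniqueness hypothesis; the argument is correct. Your explicit treatment of the shifted indices $\widehat{Q}_{\mathcal{A},k+1}\widehat{Q}_{\mathcal{B},k}$ in the odd subsequence is a point the paper passes over with ``similarly,'' but it does not change the method.
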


\begin{proof}
Since $A_{i}$ and $B_{j}$ for $i\in\left\{ 1,2,\ldots,I\right\} $
and $j\in\left\{ 1,2,\ldots,J\right\} $ are closed subsets of the
closed (hence compact) ball $B\left[0,\rho\right]$, they are compact
and hence also their intersections $A=\cap_{i=1}^{I}A_{i}$ and $B=\cap_{j=1}^{J}B_{j}$
are. Thus, $dist\left(A,B\right)$ is attained by the continuity of
the norm and the Weierstrass Extreme Value Theorem. Therefore, the
conditions needed in Theorem \bref{thm:cheney and goldstein Thm 2}
are satisfied, and hence $\textup{Fix}\left(P_{B}P_{A}\right)$ is
nonempty. Since $A_{i},B_{j}\subseteq B\left[0,\rho\right]$ for all
$i\in\left\{ 1,2,\ldots,I\right\} $ and $j\in\left\{ 1,2,\ldots,J\right\} $,
by Theorem \bref{thm:UC-to-P(B)P(A)} we have $\lim_{k\rightarrow\infty}\left(\widehat{Q}_{\mathcal{B},\tau^{\mathcal{B}},k,\beta}\widehat{Q}_{\mathcal{A},\tau^{\mathcal{A}},k,\alpha}\right)=P_{B}P_{A}$.
 In addition, Lemma \bref{lem:xk_in_Ball} ensures that $x^{k}\in B[0,\rho]$
for all nonnegative integer $k$.

Let $S$ be the set of accumulation points of $\left\{ x^{2r}\right\} _{r=0}^{\infty}$.
By the Bolzano--Weierstrass theorem $S\neq\emptyset$. Moreover, since, as is well known, the set of accumulation points of a sequence is closed and since $S$ is contained in the compact set $B\left[0,\rho\right]$, we conclude that $S$ is compact too.

We claim that $P_{B}P_{A}\left(S\right)=S$. Indeed, let $s\in S$
be any point. Then there is a subsequence $\left\{ x^{2r_{k}}\right\} _{k=0}^{\infty}$
such that $s=\lim_{k\rightarrow\infty}x^{2r_{k}}$, and so for a given
$\varepsilon_{1}>0$ there is some natural number $k_{0}$ such that
$\left\Vert s-x^{2r_{k}}\right\Vert \leq\varepsilon_{1}$ for all
natural numbers $k>k_{0}$. Due to Theorem \bref{thm:UC-to-P(B)P(A)}
and since the sequence $\left\{ x^{k}\right\} _{k=0}^{\infty}$ is
contained in $B\left[0,\rho\right]$ by Lemma \bref{lem:xk_in_Ball},
for a given $\varepsilon_{2}>0$ there is a natural number $r_{0}$
such that for all natural numbers $r>r_{0}$ one has $\left\Vert P_{B}P_{A}\left(y\right)-\widehat{Q}_{\mathcal{B},\tau^{\mathcal{B}},r,\beta}\widehat{Q}_{\mathcal{A},\tau^{\mathcal{A}},r,\alpha}\left(y\right)\right\Vert \leq\varepsilon_{2}$
for all $y\in B\left[0,\rho\right]$ and, in particular, for $y:=x^{2r}$.
Let $\varepsilon>0$ and define $\varepsilon_{1}:=\dfrac{\varepsilon}{2}$
and $\varepsilon_{2}:=\dfrac{\varepsilon}{2}$. For $\varepsilon_{1}$
and $\varepsilon_{2}$ we can associate the natural numbers $k_{0}$
and $r_{0}$, respectively, mentioned above, and hence, if $k$ is
a natural number satisfying $k>k_{0}$ and $r_{k}>r_{0}$ , then,
by the nonexpansivity of $P_{B}P_{A}$ and by the construction of the sequence $\left\{ x^{k}\right\} _{k=0}^{\infty}$ (using Algorithm \bref{alg:our-alg}), we have 
\begin{align}
 & \left\Vert P_{B}P_{A}\left(s\right)-x^{2r_{k}+2}\right\Vert =\left\Vert P_{B}P_{A}\left(s\right)-\widehat{Q}_{\mathcal{B},\tau^{\mathcal{B}},r_{k},\beta}\widehat{Q}_{\mathcal{A},\tau^{\mathcal{A}},r_{k},\alpha}\left(x^{2r_{k}}\right)\right\Vert \nonumber \\
 & \leq\left\Vert P_{B}P_{A}\left(s\right)-P_{B}P_{A}\left(x^{2r_{k}}\right)\right\Vert +\left\Vert P_{B}P_{A}\left(x^{2r_{k}}\right)-\widehat{Q}_{\mathcal{B},\tau^{\mathcal{B}},r_{k},\beta}\widehat{Q}_{\mathcal{A},\tau^{\mathcal{A}},r_{k},\alpha}\left(x^{2r_{k}}\right)\right\Vert \nonumber \\
 & \leq\left\Vert s-x^{2r_{k}}\right\Vert +\left\Vert P_{B}P_{A}\left(x^{2r_{k}}\right)-\widehat{Q}_{\mathcal{B},\tau^{\mathcal{B}},r_{k},\beta}\widehat{Q}_{\mathcal{A},\tau^{\mathcal{A}},r_{k},\alpha}\left(x^{2r_{k}}\right)\right\Vert \nonumber \\
 & \leq\varepsilon_1+\varepsilon_2=\varepsilon.
\end{align}

This implies that $P_{B}P_{A}\left(s\right)$ is also an accumulation
point of $\left\{ x^{2r}\right\} _{r=0}^{\infty}$. Since $s$ was
an arbitrary point in $S$, we conclude that,  $P_{B}P_{A}\left(S\right)\subseteq S$.

On the other hand, suppose that $s\in S$. Then $s$ is the limit
of a subsequence of $\left\{ x^{2r}\right\} _{r=0}^{\infty}$, namely
$s=\lim_{\ell\rightarrow\infty}x^{2r_{\ell}}$ for some subsequence
$\left\{ x^{2r_{\ell}}\right\} _{\ell=0}^{\infty}$. Since $\left\{ x^{2r_{\ell}-2}\right\} _{\ell=1}^{\infty}$
is a subsequence of the sequence $\left\{ x^{t}\right\} _{t=1}^{\infty}$
which is contained in the compact set $B\left[0,\rho\right]$, it
follows from the compactness of $B\left[0,\rho\right]$ that $\left\{ x^{2r_{\ell}-2}\right\} _{\ell=1}^{\infty}$
has a limit point $s^{\prime}\in B\left[0,\rho\right]$. Hence $s^{\prime}=\lim_{k\rightarrow\infty}x^{2r_{\ell_{k}}-2}$
for some subsequence $\left\{ x^{2r_{\ell_{k}}-2}\right\} _{k=1}^{\infty}$
of $\left\{ x^{2r_{\ell}-2}\right\} _{\ell=1}^{\infty}$ , and therefore,
given $\varepsilon_{1}>0$, there is some $k_{0}\in\mathbb{N}$ such
that $\left\Vert s^{\prime}-x^{2r_{\ell_{k}}-2}\right\Vert \leq\varepsilon_{1}$
for all $k>k_{0}$. In addition, since $s^{\prime}$ is the limit
of a subsequence of the sequence $\left\{ x^{2t}\right\} _{t=0}^{\infty}$,
the definition of $S$ implies that $s^{\prime}\in S$. Due to Theorem
\bref{thm:UC-to-P(B)P(A)} and since the sequence $\left\{ x^{t}\right\} _{t=0}^{\infty}$
is contained in $B\left[0,\rho\right]$ by Lemma \bref{lem:xk_in_Ball},
 for a given $\varepsilon_{2}>0$ there is a natural number $\ell_{0}$
such that for all natural numbers $\ell>\ell_{0}$ and for all $y\in B\left[0,\rho\right]$,
one has $\left\Vert P_{B}P_{A}\left(y\right)-\widehat{Q}_{\mathcal{B},\tau^{\mathcal{B}},r_{\ell}-1,\beta}\widehat{Q}_{\mathcal{A},\tau^{\mathcal{A}},r_{\ell}-1,\alpha}\left(y\right)\right\Vert \leq\varepsilon_{2}$.
This inequality is true, in particular, for $y:=x^{2r_{\ell}-2}$.
Let $\varepsilon>0$ and define $\varepsilon_{1}:=\dfrac{\varepsilon}{2}$
and $\varepsilon_{2}:=\dfrac{\varepsilon}{2}$. For $\varepsilon_{1}$
and $\varepsilon_{2}$ we can associate the natural numbers $k_{0}$
and $\ell_{0}$ mentioned above, and hence, if $k$ is a natural number
satisfying $k>k_{0}$ and $\ell_{k}>\ell_{0}$, then, by the nonexpansivity
of $P_{B}P_{A}$, we have 

\footnotesize
\begin{align}
 & \left\Vert P_{B}P_{A}\left(s^{\prime}\right)-x^{2r_{\ell_{k}}}\right\Vert=\left\Vert P_{B}P_{A}\left(s^{\prime}\right)-\widehat{Q}_{\mathcal{B},\tau^{\mathcal{B}},r_{\ell_{k}}-1,\beta}\widehat{Q}_{\mathcal{A},\tau^{\mathcal{A}},r_{\ell_{k}}-1,\alpha}\left(x^{2r_{\ell_{k}}-2}\right)\right\Vert \nonumber \\
 & \leq\left\Vert P_{B}P_{A}\left(s^{\prime}\right)-P_{B}P_{A}\left(x^{2r_{\ell_{k}}-2}\right)\right\Vert+\left\Vert P_{B}P_{A}\left(x^{2r_{\ell_{k}}-2}\right)-\widehat{Q}_{\mathcal{B},\tau^{\mathcal{B}},r_{\ell_{k}}-1,\beta}\widehat{Q}_{\mathcal{A},\tau^{\mathcal{A}},r_{\ell_{k}}-1,\alpha}\left(x^{2r_{\ell_{k}}-2}\right)\right\Vert \nonumber \\
&  \leq\left\Vert s^{\prime}-x^{2r_{\ell_{k}}-2}\right\Vert +\left\Vert P_{B}P_{A}\left(x^{2r_{\ell_{k}}-2}\right)-\widehat{Q}_{\mathcal{B},\tau^{\mathcal{B}},r_{\ell_{k}}-1,\beta}\widehat{Q}_{\mathcal{A},\tau^{\mathcal{A}},r_{\ell_{k}}-1,\alpha}\left(x^{2r_{\ell_{k}}-2}\right)\right\Vert \nonumber \\
&  \leq\varepsilon_1+\varepsilon_2=\varepsilon.
\end{align}
\normalsize

Therefore, $P_{B}P_{A}\left(s'\right)$ is an accumulation point of
the subsequence $\left\{ x^{2r_{\ell}}\right\} _{\ell=1}^{\infty}$,
and since we know that this subsequence also converges to $s$ by
the choice of $\left\{ x^{2r_{\ell}}\right\} _{\ell=1}^{\infty}$,
it follows that $s=P_{B}P_{A}\left(s'\right)\in P_{B}P_{A}\left(S\right)$.
Since $s$ was an arbitrary point in $S$, we conclude that $S\subseteq P_{B}P_{A}\left(S\right)$,
and since we already know that $P_{B}P_{A}\left(S\right)\subseteq S$,
we conclude that $P_{B}P_{A}\left(S\right)=S$. By Lemma \bref{lem:Lemma 2},
$S$ consists of points of $B$ nearest to $A$.

Similarly, the set $\tilde{S}$ of all accumulation points of the
odd subsequence $\left\{ x^{2k+1}\right\} _{k=0}^{\infty}$ is contained
in $A$ and satisfies $\tilde{S}=P_{A}P_{B}\left(\tilde{S}\right)$,
and so $\tilde{S}$ consists of points of $A$ nearest to $B$ by
Lemma \bref{lem:Lemma 2}. Let $\tilde{a}\in\tilde{S}$. As explained
before, Lemma \bref{lem:Lemma 2} implies that $\left(\tilde{a},P_{B}\left(\tilde{a}\right)\right)$
is a best approximation pair relative to $\left(A,B\right)$. Since
we assume that there is a unique best approximation pair $\left(a,b\right)\in A\times B$
relative to $\left(A,B\right)$, we conclude, in particular, that
$\tilde{a}=a$. Hence, $\tilde{S}=\left\{ a\right\} $, and similarly
$S=\left\{ b\right\} $. Therefore, $\left\{ x^{2k}\right\} _{k=0}^{\infty}$
has a unique accumulation point, namely it converges to $b$, and
$\left\{ x^{2k+1}\right\} _{k=0}^{\infty}$ has a unique accumulation
point, namely it converges to $a$. Now the continuity of the norm
and the fact that $\left(a,b\right)$ is a best approximation pair
relative to $\left(A,B\right)$ imply that $\lim_{k\rightarrow\infty}\left\Vert x^{2k}-x^{2k+1}\right\Vert =\left\Vert b-a\right\Vert =dist\left(A,B\right)$,
as required. Finally, Proposition \bref{prop:Proposition 6} implies
that the conclusion of the theorem holds when all the sets $A_{i}$
and $B_{j}$, $i\in\left\{ 1,2,\ldots,I\right\} $ and $j\in\left\{ 1,2,\ldots,J\right\} $,
are strictly convex. 
\end{proof}

\section{Concluding remarks and lines for further investigation\label{sec:Concluding-remarks}}

We would like to finish this paper with a few remarks, some of them
point to possible lines for further investigation, First, as noted
by one of the referees, it will be interesting to investigate whether
our main convergence result (Theorem \bref{thm:Main}) holds also when
there is more than one best approximation pair. In fact, even the
case when there is a unique best approximation pair deserves more
attention, namely to show that this case holds in a much richer setting
than when all the subsets $\left\{ A_{i}\right\} _{i=1}^{I}$ and
$\left\{ B_{j}\right\} _{j=1}^{J}$ are strictly convex: here we had
a progress, which can be found in \cite{ReemCensor2024prep}. It will also be interesting
to extend our result to infinite-dimensional spaces and to spaces
which are not Hilbert spaces (under the assumption that the projection
on a nonempty, closed and convex subset is unique, as in the case
of uniformly convex Banach spaces), where here one might replace the compactness
assumption on the subsets $\left\{ A_{i}\right\} _{i=1}^{I}$ and
$\left\{ B_{j}\right\} _{j=1}^{J}$ by their boundedness and the strong
convergence by weak convergence. Another interesting line for further
investigation was suggested by one of the referees: to check whether
in our Euclidean setting the compactness assumption on the subsets
$\left\{ A_{i}\right\} _{i=1}^{I}$ and $\left\{ B_{j}\right\} _{j=1}^{J}$
can be replaced by the assumption that $\textup{Fix}\left(P_{B}P_{A}\right)\neq\emptyset$;
in view of Theorem \bref{thm:cheney and goldstein Thm 2}, this latter assumption
is equivalent to assuming that there is at least one best approximation
pair relative to $(A,B)$ without further assumptions on the subsets
$\left\{ A_{i}\right\} _{i=1}^{I}$ and $\left\{ B_{j}\right\} _{j=1}^{J}$
(with the exception that they are nonempty, closed and convex); in
this connection we note that we use the compactness assumption several
times in our derivation, such as in Lemma \bref{lem:UC-to-Pc}, Theorem
\bref{thm:UC-to-P(B)P(A)}, Theorem \bref{thm:adapted cheney and goldstein}
and Lemma \bref{lem:xk_in_Ball}, to name a few places, and so a careful
analysis and possibly a significant change to our proofs will be required.
Finally, it will be helpful to provide explicit estimates regarding
the convergence rate of the odd and even subsequences. 

\subsubsection*{Acknowledgments}

We thank the referees for valuable remarks which helped to improve
the paper. D.R. thanks Zilin Jiang for helpful discussion. This research
is supported by the ISF-NSFC joint research plan Grant Number 2874/19.
The work of Y.C. was also supported by U. S. National Institutes of
Health grant R01CA266467.

\bibliographystyle{acm}

\end{document}